\numberwithin{equation}{section}
\numberwithin{figure}{section}
\numberwithin{table}{section}
\newtheorem{theorem}{Theorem}[section]
\newtheorem{lemma}{Lemma}[section]
\newtheorem{corollary}{Corollary}[section]
\newtheorem{proposition}{Proposition}[section]
\newtheorem{remark}{Remark}[section]
\newtheorem{assumption}{Assumption}[section]
\allowdisplaybreaks \allowdisplaybreaks[4]
\begin{document}
	\title[Semi-discretization for  stochastic nonlinear Maxwell equations]{Mean-square convergence of a semi-discrete scheme for  stochastic nonlinear Maxwell equations}
		
	\author{Chuchu Chen}
	\address{LSEC, ICMSEC,  Academy of Mathematics and Systems Science, Chinese Academy of Sciences, Beijing 100190, China}
	\email{chenchuchu@lsec.cc.ac.cn}
	
		\author{Jialin Hong}
	\address{LSEC, ICMSEC,  Academy of Mathematics and Systems Science, Chinese Academy of Sciences, Beijing 100190, China,
\and	
School of Mathematical Sciences, University of Chinese Academy of Sciences, Beijing 100049, China}
	\email{hjl@lsec.cc.ac.cn}

\author{Lihai Ji}
\address{Institute of Applied Physics and Computational Mathematics, Beijing 100094, China}
\email{jilihai@lsec.cc.ac.cn (Corresponding author)}

	\thanks{
		The research of C. Chen and J. Hong were supported by the NNSFC (NOs. 91130003, 11021101, 11290142, and 91630312), the research of
		L. Ji was supported by the NNSFC (NOs. 11601032, and 11471310).}

\maketitle	
	\begin{abstract}
{In this paper, we propose a semi-implicit Euler scheme to discretize the stochastic nonlinear Maxwell equations with multiplicative It\^o noise, which is implicit in the drift term and explicit in the diffusion term of the equations, in order to suited to It\^{o} product. Uniform bounds with high regularities of solutions for both the continuous and the discrete problems are obtained, which are crucial properties to derive the mean-square convergence with certain order.  Allowing sufficient spatial regularity and utilizing the energy estimate technique, the convergence order $\frac12$ in mean-square sense is obtained.}\\
{\sc Key Words: }{\rm\small}mean-square convergence order, semi-discrete scheme,  stochastic nonlinear Maxwell
equations, regularity
	\end{abstract}

	\section{Introduction}
Stochastic Maxwell equations play an important role in stochastic electromagnetism and statistical radiophysics fields. Some articles (see, e.g., \cite{RKT1989,FM2008,Fann2007}) introduced  randomness into Maxwell equations in order to strengthen the correspondence between theoretical results and the real-life situations. In \cite{Mich2006}, problems about how to account, rigorously, for uncertainties in classical macroscopic electromagnetic interactions between fields and systems of linear material were discussed.
\cite{L2015} considered the problem about how to use the spectral representation to describe the random electromagnetic fields, which are coupled by Maxwell's equations with a random source term.
 \cite{RSY2012} dealt with the mathematical analysis of stochastic problems arising in the theory of electromagnetic in complex media, including well-posedness, controllability and homogenization.
Assuming the existence of magnetic charges or monopoles,
consider  the following generalized symmetrized stochastic nonlinear Maxwell equations driven by multiplicative It\^o noise,
\begin{equation}\label{sto_max}
\begin{cases}
\varepsilon \partial_{t}{\bf E}-\nabla\times {\bf H}=-{\bf J}_{e}(t,{\bf x},{\bf E},{\bf H})-{\bf J}_e^{r}(t,{\bf x},{\bf E},{\bf H})\cdot\dot{W},~ &(t,{\bf x})\in(0,~T]\times D,\\
\mu \partial_{t}{\bf H}+\nabla\times {\bf E}=-{\bf J}_{m}(t,{\bf x},{\bf E},{\bf H})-{\bf J}_m^{r}(t,{\bf x},{\bf E},{\bf H})\cdot\dot{W},~ &(t,{\bf x})\in(0,~T]\times D,\\
{\bf E}(0,{\bf x})={\bf E}_0({\bf x}),~{\bf H}(0,{\bf x})={\bf H}_0({\bf x}),~&{\bf x}\in D,\\
{\bf n}\times {\bf E}={\bf 0},~&(t,{\bf x})\in(0,~T]\times\partial D,
\end{cases}
\end{equation}
where $D\subset {\mathbb R}^{d}$ with $d=3$ is a bounded domain, $T\in(0,~\infty)$, and the function  ${\bf J}:[0,T]\times D\times {\mathbb R}^d\times{\mathbb R}^d\to{\mathbb R}^d$ is a smooth nonlinear function satisfying
\begin{align}\label{bound J}
|{\bf J}(t,{\bf x},u,v)|\leq L(1+|u|+|v|),
\end{align}
and
\begin{equation}\label{bound partialJ}
|{\bf J}(t,{\bf x},u_1,v_1)-{\bf J}(s,{\bf x},u_2,v_2)|\leq L(|t-s|+|u_1-u_2|+|v_1-v_2|),
\end{equation}
for all ${\bf x}\in D$, $t,s\in[0,T]$, $u,v,u_1,v_1,u_2,v_2\in{\mathbb R}^d$ and some constant $L>0$. Here $|\cdot|$ denotes the Euclidean norm, and ${\bf J}$ could be ${\bf J}_e$, ${\bf J}_e^r$, ${\bf J}_m$ or ${\bf J}_m^r$.

Recently, more and more attention has been paid to the numerical analysis of stochastic Maxwell equations. In \cite{Zhang2008}, the author considered the stochastic Maxwell equations \eqref{sto_max} driven by a color noise and investigated the finite element method for these equations and furthermore obtained the $L^2$ error estimates. In \cite{BAZC2010}, the authors considered the two-dimensional Maxwell equations through a random source term and constructed a new numerical method based on Wiener chaos expansion. Due to the superiorities of multi-symplectic methods, many researchers have studied the stochastic multi-symplectic methods to stochastic Maxwell equations. \cite{HJZ2014} first proposed a stochastic multi-symplectic method for stochastic Maxwell equations with additive noise by using stochastic variational principle. The further analysis of preservation of physical properties of stochastic Maxwell equations with additive noise via stochastic multi-symplectic methods was investigated in \cite{CHZ2016}. More recently, \cite{HJZC2017} designed an innovative stochastic multi-symplectic method to three-dimensional stochastic Maxwell equations with multiplicative noise based on wavelet interpolation technique. This method has been applied successfully to solve
a three-dimensional stochastic electromagnetic fields problem with periodic boundary condition.

The main difficulty in dealing with stochastic partial differential equations is the presence of unbounded differential operators and stochastic integrals. Here we strongly use the fact that the equation is semilinear so that we can write it in the abstract form:
\begin{equation}\label{1.4}
\begin{cases}
{\rm d}u(t)=\left[Mu(t)+F(t,u(t))\right]{\rm d}t+B(t,u(t)){\rm d}W(t),~t\in(0,~T],\\
u(0)=u_0,
\end{cases}
\end{equation}
whose solution can be writen in an integral form  containing a bounded linear semigroup instead of the unbounded differential operator,
	\begin{equation}\label{1.5}
u(t)=S(t)u_0+\int_0^t S(t-s)F(s,u(s)){\rm d}s+\int_0^t S(t-s)B(s,u(s))dW(s)~ a.s.,
\end{equation}
with $M$ being the Maxwell operator and generating the unitary $C_0$-semigroup   $S(t)=e^{tM}$; see Section 2 for the procedure of rewriting stochastic Maxwell equations \eqref{sto_max} into the abstract form \eqref{1.4}.
 Most of the analysis is made on this mild solution form \eqref{1.5} of the equation. In this way we require the minimal regularity assumptions on the solutions.
We first establish the uniform boundedness of the solution in $L^{p}(\Omega;{\mathcal D}(M^k))$-norm for a given integer $k\in{\mathbb N}$ with ${\mathcal D}(M^k)$ being the $k$-th power of the operator $M$. Thanks to the mild solution \eqref{1.5} and the estimates for stochastic convolutions, we get
	\begin{eqnarray}\label{1.6}
\sup_{t\in[0,T]}\|u(t)\|_{L^{p}(\Omega;{\mathcal D}(M^k))}\leq C(1+\|u_0\|_{L^p(\Omega;{{\mathcal D}(M^k)})}),
\end{eqnarray}
where the positive constant $C$ may depend on $p$, $T$, and $\|Q^{\frac12}\|_{HS(U,H^{k+\gamma}(D))}$ with $\gamma >d/2$. After establishing the convergence order of the semigroup $S(t)$ and identity operator $Id$ with respect to time $t$ in Lemma \ref{est ope},
 the H\"older continuity of the solution in $L^{2}(\Omega;{\mathcal D}(M^{k-1}))$-norm is derived, i.e,
 	\begin{eqnarray}
 {\mathbb E}\|u(t)-u(s)\|_{{\mathcal D}(M^{k-1})}^2\leq C|t-s|,
 \end{eqnarray}
 where the positive constant $C$ may depend on $p$, $T$,  $\|Q^{\frac12}\|_{HS(U,H^{k+\gamma}(D))}$, and $\|u_0\|_{L^p(\Omega;{{\mathcal D}(M^k)})}$.

 The main goal of this work is to propose and study a semi-discretization in the temporal direction of \eqref{1.4} which inherits a uniform estimate in ${\mathcal D}(M)$-norm,
 \begin{equation}\label{1.8}
 u^{n+1}=u^{n}+\tau Mu^{n+1}+\tau F(t_{n+1},u^{n+1})+B(t_n,u^{n})\Delta W^{n+1}, ~n\geq 0.
 \end{equation}
 We show the existence of an $\{{\mathcal F}_{t_n};0\leq n\leq N\}$-adapted discrete solution  $\{u^n;~n\in{\mathbb N}\}$, and the iterates $\{u^n;~n\in{\mathbb N}\}$ satisfy
 	\begin{equation}\label{1.9}
 \max_{1\leq n\leq N}{\mathbb E}\|u^n\|^{p}_{{\mathcal D}(M)}\leq C(1+\|u_0\|^{p}_{L^{p}(\Omega;{\mathcal D}(M))}).
 \end{equation}
 In order to derive this result, we take ${\mathbb H}$-inner product of \eqref{1.8} with $u^{n+1}$ and $Mu^{n+1}-Mu^{n}$, respectively. It is
 important to note that the appearance of terms $\|u^{n+1}-u^{n}\|^2_{\mathbb H}$ in the right-hand side of \eqref{4.5} and $\|Mu^{n+1}-Mu^{n}\|^2_{\mathbb H}$ in the right-hand side of \eqref{4.7} is crucial to obtain the boundedness result \eqref{1.9}, which could absorb the difficulty cased by the stochastic term.
It shows  the stability of the iterates $\{u^n;~n\in{\mathbb N}\}$ for the scheme \eqref{1.8}.

 It is  important to understand how the numerical methods
approximate the solutions of \eqref{sto_max} and the first step is to analyze the error.
 In the second part of this work, we are interested in the mean-square convergence for iterates $\{u^n;~n\in{\mathbb N}\}$ of \eqref{1.8}.
 To the best of our knowledge, however, there has been no work in the literature which analyzes the convergence of numerical method for stochastic
 Maxwell equations.
 A relevant prerequisite for this purpose is to provide strong stability results \eqref{1.6} for the original problem \eqref{1.4}, and \eqref{1.9} for  the discretization \eqref{1.8}.
Define the local truncation error of scheme \eqref{1.8} by
\begin{equation}\label{1.10}
\delta^{n+1}:=u(t_{n+1})-u(t_n)-\tau Mu(t_{n+1})-\tau F(t_{n+1},u(t_{n+1}))-B(t_n,u(t_n))\Delta W^{n+1},
\end{equation}	
where $u(t)$ means the solution of stochastic nonlinear Maxwell equations \eqref{1.4}.
Using the classical energy technique, we find the relationship between the global error in mean-square sense and the local truncation error in mean and mean-square senses, i.e.,
\begin{equation}
{\mathbb E}\|e^{n+1}\|^2_{\mathbb H}\leq {\mathbb E}\|e^{n}\|^2_{\mathbb H}+C\tau\big({\mathbb E}\|e^{n}\|^2_{\mathbb H}+{\mathbb E}\|e^{n+1}\|^2_{\mathbb H}\big)+C{\mathbb E}\|\delta^{n+1}\|^2_{\mathbb H}
+\frac{C}{\tau}{\mathbb E}\|{\mathbb E}(\delta^{n+1}|{\mathcal F}_{t_n})\|^2_{\mathbb H},
\end{equation}
where $e^n=u(t_n)-u^n$.
It states that the global mean-square convergence order depends only on the local truncation error in mean and mean-square senses for sufficiently small time step size.
Via replacing the expression of $u(t_{n+1})-u(t_n)$ in \eqref{1.10} by the strong solution of \eqref{1.4}, we get the estimates of local truncation error $\delta^{n+1}$,
	\begin{equation}
\mathbb{E}\|\delta^{n+1}\|_{\mathbb H}^2\leq C\tau^2,\quad{\mathbb E}\|\mathbb{E}\big(\delta^{n+1}|{\mathcal F}_{t_{n}}\big)\|^2_{\mathbb H}\leq C\tau^{3},
\end{equation}
which leads to
	\begin{equation}\label{1.13}
\max_{0\leq n\leq N} \left(\mathbb{E}\|e^n\|_{\mathbb H}^2\right)^{1/2}\leq C\tau^{\frac12},
\end{equation}
where the positive constant $C$ may depend on the Lipschitz coefficients of $F$ and $B$, $T$, $\|u_0\|_{L^2(\Omega;{\mathcal D}(M^2))}$ and $\|Q^{\frac12}\|_{HS(U,H^{2+\gamma}(D))}$, but independent of $\tau$ and $n$.
The estimate \eqref{1.13} means that   the mean-square convergence order of \eqref{1.8} is of $1/2$.

%


The outline of this paper is as follows. In Section 2, some preliminaries are collected and an abstract formulation of \eqref{sto_max} is set forth. In Section 3, we analyze the regularities of the solution of stochastic nonlinear Maxwell equations \eqref{sto_max}, including the uniform boundedness and H\"older continuity. In Section 4, a semi-implicit Euler scheme in temporal direction is proposed and the mean-square convergence order is derived.

\section{Preliminaries and framework}
For the coefficients of equations \eqref{sto_max} we suppose that
\begin{equation}
\mu, \varepsilon\in L^{\infty}(D),~\mu,\varepsilon\geq \delta>0,
\end{equation}
for a constant $\delta>0$. The basic Hilbert space we work with is ${\mathbb H}=L^2(D)^3\times L^2(D)^3$ with the inner product is defined by
\[
\left\langle \begin{pmatrix}
{\bf E}_1\\{\bf H}_1
\end{pmatrix},~ \begin{pmatrix}
{\bf E}_2\\{\bf H}_2
\end{pmatrix}\right\rangle_{\mathbb H}:=\int_{D}(\varepsilon {\bf E}_1\cdot {\bf E}_2
+\mu{\bf H}_1\cdot{\bf H}_2){\rm d}{\bf x}.
\]
By our assumption on $\mu$ and $\varepsilon$, this inner product is obviously equivalent to the standard inner product on $L^2(D)^6$. The norm induced by this inner product corresponds to the electromagnetic energy of the physical system
\[
\left\|\begin{pmatrix}
{\bf E}\\{\bf H}
\end{pmatrix}\right\|_{\mathbb H}=\left(\int_{D}(\varepsilon|{\bf E}|^2+\mu|{\bf H}|^2){\rm d}{\bf x}\right)^{\frac{1}{2}}.
\]
If there's no external source, the electromagnetic energy of \eqref{1.4} is a conserved quantity, i.e., $\|u(t)\|_{\mathbb H}=\|u_0\|_{\mathbb H}$.

The Maxwell operator is defined by
\begin{equation}\label{M_operator}
M=\left(
\begin{array}{cc}
0& \varepsilon^{-1}\nabla\times \\
-\mu^{-1}\nabla\times &0 \\
\end{array}
\right)
\end{equation}
with domain
\begin{equation}\label{domain}
\begin{split}
{\mathcal D}(M)&=\left\{ \left(\begin{array}{c}
{\bf E} \\
{\bf H}
\end{array}\right)\in {\mathbb H}:~M\left(\begin{array}{c}
{\bf E} \\
{\bf H}
\end{array}\right)=\left(\begin{array}{c}
\varepsilon^{-1} \nabla\times{\bf H}\\
-\mu^{-1}\nabla\times{\bf E}
\end{array} \right)\in{\mathbb H},~ {\bf n}\times{\bf E}\Big|_{\partial D}={\bf 0} \right\}\\
&=H_0({\rm curl},D)\times H({\rm curl},D),
\end{split}
\end{equation}
where the curl-spaces are defined by
\[
H({\rm curl},D):=\{ v\in L^2(D)^3:~\nabla\times v\in L^2(D)^3 \},
\]
and
\[
H_0({\rm curl},D):=\{ v\in H({\rm curl},D):~{\bf n}\times v|_{\partial D}={\bf 0} \}.
\]
The corresponding graph norm is $\|v\|_{{\mathcal D}(M)}:=\big(\|v\|_{\mathbb H}^2+\|Mv\|_{\mathbb H}^2\big)^{1/2}$.
The Maxwell operator $M$ defined in \eqref{M_operator} with domain \eqref{domain} is colsed,  skew-adjoint on $\mathbb{H}$, and thus  generates a unitary $C_0$-group $S(t)=e^{tM}$ on $\mathbb{H}$ in the view of Stone's theorem (see for instance \cite[Theorem II.3.24]{EN2000}). A frequently used property for Maxwell operator $M$ is:
$
\langle Mu,~u\rangle_{\mathbb H}=0, ~\forall~u\in{\mathcal D}(M).
$
We refer interested readers to \cite[Chapter 3]{Pzur2013} and references therein for more introduction about Maxwell operator.
Recursively, we could define the domain ${\mathcal D}(M^k)=\{u\in{\mathcal D}(M^{k-1}):~M^{k-1}u\in{\mathcal D}(M)\}$ for the $k$-th power of the operator $M$, $k\in{\mathbb N}$, with ${\mathcal D}(M^0)={\mathbb H}$, given the norm
\[
\|v\|_{{\mathcal D}(M^k)}:=\Big(\|v\|_{\mathbb H}^2+\|M^k v\|_{\mathbb H}^2\Big)^{1/2},~\forall~v\in {\mathcal D}(M^k),
\]
which is a Hilbert space. In fact,
the norm $\|\cdot\|_{{\mathcal D}(M^k)}$ corresponds to the scalar product
\[
\langle u,~v\rangle_{{\mathcal D}(M^k)}=\langle u,~v\rangle_{\mathbb H}
+\langle M^k u,~M^k v\rangle_{\mathbb H}, ~\forall~u,v\in {\mathcal D}(M^k),
\]
and thus ${\mathcal D}(M^k)$ is a pre-Hilbert space.
If $\{v_{\ell}\}_{\ell\in{\mathbb N}}$  is a Cauchy sequence for $\|\cdot\|_{{\mathcal D}(M^k)}$, then $\{v_{\ell}\}_{\ell\in{\mathbb N}}$ and $\{M^k v_{\ell}\}_{\ell\in{\mathbb N}}$ are Cauchy sequences in ${\mathbb H}$.
Since ${\mathbb H}$ is complete, $v_{\ell}\to v$ and $M^k v_{\ell}\to v^{k}$ in ${\mathbb H}$. The closedness of operator $M$ leads to $v^{k}=M^k v$, i.e., $v_{\ell}\to v$ in ${\mathcal D}(M^k)$ which is thus a Hilbert space.
Moreover, it can be shown that $\|u\|_{{\mathcal D}(M^{k_1})}\leq C\|u\|_{{\mathcal D}(M^{k_2})}$, $\forall~u\in{\mathcal D}(M^{k_2})$, $k_1\leq k_2$.

Let $F:[0,~T]\times {\mathbb H}\to{\mathbb H}$ be a Nemytskij operator associated to ${\bf J}_{e}$, ${\bf J}_m$, defined by
\begin{equation}
F(t,u)({\bf x})=\left( \begin{array}{c}
-\varepsilon^{-1}{\bf J}_{e}(t,{\bf x},{\bf E}(t,{\bf x}),{\bf H}(t,{\bf x}))\\
-\mu^{-1}{\bf J}_{m}(t,{\bf x},{\bf E}(t,{\bf x}),{\bf H}(t,{\bf x}))
\end{array} \right), ~{\bf x}\in D,~u=\left(\begin{array}{c}
{\bf E} \\
{\bf H}
\end{array}\right)\in{\mathbb H}.
\end{equation}
Thanks to \eqref{bound J} and \eqref{bound partialJ}, the operator $F$ satisfies
\begin{eqnarray}
&&\|F(t,u)\|_{\mathbb H} \leq C(1+\|u\|_{\mathbb H}),\label{bound F}\\
&&\|F(t,u)-F(s,v)\|_{\mathbb H}\leq C (|t-s|+\|u-v\|_{\mathbb H}),\label{Lip F}
\end{eqnarray}
for all $t,s\in[0,T]$, and $u,v\in{\mathbb H}$. Here the positive constant $C$
may depend on  $\delta$, the volume $|D|$ of domain $D$, and the constant $L$ in \eqref{bound J} and \eqref{bound partialJ}. In fact,
\begin{equation*}
\begin{split}
\|F(t,u)\|_{\mathbb H}&=\Big(\int_{D}\varepsilon|\varepsilon^{-1}{\bf J}_e|^2
+\mu|\mu^{-1}{\bf J}_m|^2 {\rm d}{\bf x}\Big)^{\frac12}\\
&\leq \delta^{-\frac12}\Big(\int_{D}2L^2(1+|{\bf E}|+|{\bf H}|)^2{\rm d}{\bf x}\Big)^{\frac12}\\
&\leq \delta^{\frac12}\Big[(6L^2|D|)^{\frac12}+\Big(6L^2\delta^{-1}\int_{D}(\varepsilon|{\bf E}|^2+\mu|{\bf H}|^2){\rm d}{\bf x}\Big)^{\frac12} \Big]\\
&\leq C(1+\|u\|_{\mathbb H}),
\end{split}
\end{equation*}
and the proof of \eqref{Lip F} is similar as above.

Let $Q$ be a symmetric, positive definite operator with finite trace. The driven stochastic process $W(t)$ is a standard $Q$-Wiener process with respect to the filtered probability space $(\Omega,{\mathcal F},\{{\mathcal F}_{t}\}_{0\leq t\leq T},{\mathbb P})$, which can be represented as
\begin{equation}
W(t)=\sum_{i=1}^{\infty}Q^{\frac12}e_i\beta_i(t),~t\in[0,~T],
\end{equation}
where $\{\beta_i(t)\}_{i\in{\mathbb N}}$ is a family of  independent real-valued Brownian motions and $\{e_i\}_{i\in{\mathbb N}}$ is an orthonormal basis of the space $U=L^2(D)$.

For diffusion term, we introduce the Nemytskij operator $B:[0,~T]\times{\mathbb H}\to HS(U_0,{\mathbb H})$ by
\begin{equation}
(B(t,u)v)({\bf x})=\left( \begin{array}{c}
-\varepsilon^{-1}{\bf J}_{e}^{r}(t,{\bf x},{\bf E}(t,{\bf x}),{\bf H}(t,{\bf x}))v({\bf x})\\
-\mu^{-1}{\bf J}_{m}^r(t,{\bf x},{\bf E}(t,{\bf x}),{\bf H}(t,{\bf x}))v({\bf x})
\end{array} \right),
\end{equation}
where ${\bf x}\in D,~u=\left(\begin{array}{c}
{\bf E} \\
{\bf H}
\end{array}\right)\in{\mathbb H}, \text{ and } v\in U_0:=Q^{\frac12}U$. Here $HS(U,H)$ denotes the separable Hilbert space of all Hilbert-Schmidt operators from one separable Hilbert space $U$ to another separable Hilbert space $H$, equipped with the scalar product
\[
\langle \Gamma_1,~\Gamma_2 \rangle_{HS(U,H)}=\sum_{j=1}^{\infty}\langle \Gamma_1 \eta_j,~\Gamma_2 \eta_j \rangle_{H},
\]
and the corresponding norm
\[
\|\Gamma\|_{HS(U,H)}=\left(\sum_{j=1}^{\infty}\|\Gamma\eta_j\|_{H}^2\right)^{\frac12},
\]
where $\{\eta_j\}_{j\in{\mathbb N}}$ is an orthonormal basis of $U$.

Thanks to \eqref{bound J} and \eqref{bound partialJ}, we have
\begin{eqnarray}
&&\|B(t,u)\|_{HS(U_0,{\mathbb H})}\leq C \|Q^{\frac12}\|_{HS(U,H^{\gamma}(D))}(1+\|u\|_{\mathbb H}^2)^{\frac12},\label{bound B}\\
&&\|B(t,u)-B(s,v)\|_{HS(U_0,{\mathbb H})}\leq C\|Q^{\frac12}\|_{HS(U,H^{\gamma}(D))}(|t-s|+\|u-v\|_{\mathbb H}),\label{Lip B}
\end{eqnarray}
for all $t,s\in[0,~T]$, $u,v\in {\mathbb H}$. Here the positive constant $C$
may depend on  $\delta$, the volume $|D|$ of domain $D$, and the constant $L$ in \eqref{bound J} and \eqref{bound partialJ}. In fact,
\begin{equation*}
\begin{split}
&\|B(t,u)\|_{HS(U_0,{\mathbb H})}^2=\|B(t,u)Q^{\frac12}\|^2_{HS(U,{\mathbb H})}
=\sum_{j=1}^{\infty}\|B(t,u)Q^{\frac12}e_j\|_{\mathbb H}^2\\
&=\sum_{j=1}^{\infty}\int_{D} \varepsilon^{-1}|{\bf J}_e^rQ^{\frac12}e_j({\bf x})|^2+\mu^{-1}|{\bf J}_m^rQ^{\frac12}e_j({\bf x})|^2 {\rm d}{\bf x}\\
&\leq 6L^2\delta^{-1} \sum_{j=1}^{\infty}\|Q^{\frac12}e_j\|_{L^{\infty}(D)}^2
\int_{D}(1+|{\bf E}|^2+|{\bf H}|^2){\rm d}{\bf x}\\
&\leq 6L^2\delta^{-1} \|Q^{\frac12}\|^2_{HS(U,H^{\gamma}(D))}(|D|+\delta^{-1}\int_{D}\varepsilon|{\bf E}|^2+\mu|{\bf H}|^2{\rm d}{\bf x})\\
&\leq C \|Q^{\frac12}\|^2_{HS(U,H^{\gamma}(D))}(1+\|u\|_{\mathbb H}^2),
\end{split}
\end{equation*}
where we have used the Sobolev embedding $H^{\gamma}(D)\subset L^{\infty}(D)$ for any $\gamma>d/2$, $d=3$ in this paper.
and the proof of \eqref{Lip B} is similar as above.

At this point, we introduce the abstract form of stochastic nonlinear Maxwell equations in infinite dimensional space ${\mathbb H}$:
\begin{equation}\label{sM_equations}
\begin{cases}
{\rm d}u(t)=\left[Mu(t)+F(t,u(t))\right]{\rm d}t+B(t,u(t)){\rm d}W(t),~t\in(0,~T],\\
u(0)=u_0,
\end{cases}
\end{equation}
where $M$, $F$, $B$ and $W$ are defined as above, and
\[
u=\begin{pmatrix}{\bf E}(t,{\bf x})\\ {\bf H}(t,{\bf x})\end{pmatrix},\quad
u_0=\begin{pmatrix}{\bf E}_0({\bf x})\\ {\bf H}_0({\bf x})\end{pmatrix}.
\]
Now we look at the existence and uniqueness of the mild solution of stochastic nonlinear Maxwell equations \eqref{sto_max} under certain conditions on the original functions $J_{e}, J_{m},J_{e}^r,J_m^r$, operator $Q$ and initial data;
see \cite{RSY2012} for the well-posedness of stochastic Maxwell equations  in complex media given conditions directly on $F$ and $B$. Moreover, using the Burkholder-Davis-Gundy-type inequality we present an priori estimation on $\sup_{t\in[0,T]}\|u(t)\|_{L^{p}(\Omega;{\mathbb H})}$ in Theorem \ref{well-posedness} and on ${\mathbb E}\Big(\sup_{t\in[0,T]}\|u(t)\|^{p}_{\mathbb H}\Big)$ in Corollary \ref{well-posedness2}; see \cite{PZ2014} for the estimations about stochastic integrals and stochastic convolutions.
\begin{theorem}\label{well-posedness}
	Suppose conditions \eqref{bound J} and \eqref{bound partialJ} are fulfilled, let $W(t)$, $t\in[0,~T]$ be a $Q$-Wiener process with $Q^{\frac12}\in HS(U,H^{\gamma}(D))$ for $\gamma>d/2$, and let $u_0$ be an ${\mathcal F}_0$-measurable ${\mathbb H}$-valued random variable satisfying $\|u_0\|_{L^{p}(\Omega;{\mathbb H})}<\infty$ for some $p\geq 2$. Then stochastic Maxwell equations \eqref{sM_equations} have a unique mild solution given by
	\begin{equation}\label{mild sol}
	u(t)=S(t)u_0+\int_0^t S(t-s)F(s,u(s)){\rm d}s+\int_0^t S(t-s)B(s,u(s))dW(s),~ {\mathbb P}\text{-}a.s.,
	\end{equation}
	for each $t\in[0,~T]$.
	Moreover, there exists a constant $C:=C(p,T,\|Q^{\frac12}\|_{HS(U,H^{\gamma}(D))})\in(0,~\infty)$ such that
	\begin{align}
	\sup_{t\in[0,T]}\|u(t)\|_{L^{p}(\Omega;{\mathbb H})}\leq C(1+\|u_0\|_{L^{p}(\Omega;{\mathbb H})}).
	\end{align}
\end{theorem}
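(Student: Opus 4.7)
The plan is to follow the classical Banach fixed-point approach for mild solutions of stochastic evolution equations, exploiting the crucial simplification that $M$ generates the \emph{unitary} $C_0$-group $S(t)=e^{tM}$, so $\|S(t)v\|_{\mathbb H}=\|v\|_{\mathbb H}$ and $\|S(t)\Gamma\|_{HS(U_0,\mathbb H)}=\|\Gamma\|_{HS(U_0,\mathbb H)}$ for every $t\in[0,T]$; this unitarity removes the usual semigroup factors from every estimate and allows the Lipschitz constants of $F$ and $B$ to pass directly through the convolution.

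First I would introduce the Banach space $\mathcal X_T$ of $\mathbb H$-valued predictable processes $v$ with finite norm $\|v\|_{\mathcal X_T}:=\sup_{t\in[0,T]}\|v(t)\|_{L^{p}(\Omega;\mathbb H)}$, and define the Picard operator
\[
(\Phi v)(t):=S(t)u_0+\int_0^t S(t-s)F(s,v(s))\,{\rm d}s+\int_0^t S(t-s)B(s,v(s))\,{\rm d}W(s).
\]
To verify $\Phi:\mathcal X_T\to\mathcal X_T$ I would bound the three terms separately: unitarity gives $\|S(t)u_0\|_{L^{p}(\Omega;\mathbb H)}=\|u_0\|_{L^{p}(\Omega;\mathbb H)}$; Minkowski's integral inequality together with the linear growth \eqref{bound F} controls the drift by $CT(1+\|v\|_{\mathcal X_T})$; and for the stochastic convolution I would apply the Hilbert-space Burkholder-Davis-Gundy inequality to get
\[
\mathbb E\Big\|\int_0^t S(t-s)B(s,v(s))\,{\rm d}W(s)\Big\|_{\mathbb H}^{p}\le C_p\,\mathbb E\Big(\int_0^t\|B(s,v(s))\|_{HS(U_0,\mathbb H)}^2\,{\rm d}s\Big)^{p/2},
\]
which via H\"older in time and \eqref{bound B} is dominated by $C T^{p/2}\|Q^{1/2}\|_{HS(U,H^\gamma(D))}^p(1+\|v\|_{\mathcal X_T}^p)$.

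For uniqueness and existence I would equip $\mathcal X_T$ with the equivalent weighted norm $\|v\|_{\lambda}:=\sup_{t\in[0,T]}e^{-\lambda t}\|v(t)\|_{L^{p}(\Omega;\mathbb H)}$ and run the same computation with \eqref{Lip F} and \eqref{Lip B} in place of the linear growth bounds, obtaining $\|\Phi v_1-\Phi v_2\|_\lambda\le \varphi(\lambda)\|v_1-v_2\|_\lambda$ with $\varphi(\lambda)\to 0$ as $\lambda\to\infty$. Choosing $\lambda$ large enough produces a strict contraction, and Banach's theorem yields the unique mild solution satisfying \eqref{mild sol}. The $L^{p}$-moment bound then follows by inserting $v=u$ into the three estimates above without taking a supremum, arriving at the integral inequality
\[
\|u(t)\|_{L^{p}(\Omega;\mathbb H)}^p\le C\Big(\|u_0\|_{L^{p}(\Omega;\mathbb H)}^p+\int_0^t\bigl(1+\|u(s)\|_{L^{p}(\Omega;\mathbb H)}^p\bigr)\,{\rm d}s\Big),
\]
after which Gr\"onwall's lemma delivers the required constant $C(p,T,\|Q^{1/2}\|_{HS(U,H^\gamma(D))})$.

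The main technical obstacle is the stochastic convolution step, since BDG produces $\mathbb E(\int_0^t\|B\|_{HS}^2\,{\rm d}s)^{p/2}$ rather than the pointwise-in-time quantity $\int_0^t\mathbb E\|B\|_{HS}^p\,{\rm d}s$ needed to close the Gr\"onwall loop; an intermediate H\"older application (which requires $p\ge 2$) is essential, and it is here that the constant $\|Q^{1/2}\|_{HS(U,H^\gamma(D))}$ enters through \eqref{bound B} via the Sobolev embedding $H^\gamma(D)\subset L^\infty(D)$ for $\gamma>d/2$. Everything else is a routine book-keeping of powers of $T$ and $p$.
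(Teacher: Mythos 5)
Your proposal is correct and follows essentially the same route as the paper: the paper delegates existence and uniqueness to the standard fixed-point theorem for globally Lipschitz coefficients (citing \cite{RSY2012} and \cite{PZ2014}), which is exactly the contraction argument you spell out, and its moment bound is obtained by the same Burkholder--Davis--Gundy plus H\"older-in-time plus Gronwall computation you describe, with unitarity of $S(t)$ absorbing the semigroup factors. The only difference is that you write out the Picard iteration in detail where the paper simply cites it.
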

\begin{proof}
	Under conditions  \eqref{bound J} and \eqref{bound partialJ}, we see that from \eqref{Lip F} and \eqref{Lip B}
 $F$ and $B$ are both globally Lipschitz functions, the existence and uniqueness of the mild solution  \eqref{mild sol} follows from \cite[Theorem 12.4.7]{RSY2012}, or \cite[Theorem 7.2]{PZ2014} for general stochastic evolution equation. Using the Burkholder-Davis-Gundy-type inequality \cite[Theorem 4.36]{PZ2014}, linear growth \eqref{bound F} and \eqref{bound B} of $F$ and $B$, we have
	\begin{equation*}
	\begin{split}
	{\mathbb E}\|u(t)\|^{p}_{{\mathbb H}}	\lesssim& {\mathbb E}\|S(t)u_0\|^{p}_{{\mathbb H}}
	+{\mathbb E}\int_0^t\|S(t-s)F(s,u(s))\|^{p}_{{\mathbb H}}{\rm d}s\\
	&	+{\mathbb E}\Big\|\int_0^t S(t-s)B(s,u(s)){\rm d}W(s)\Big\|^{p}_{{\mathbb H}}\\
	\lesssim& {\mathbb E}\|u_0\|^{p}_{{\mathbb H}}+\int_0^t (1+{\mathbb E}\|u(s)\|^{p}_{{\mathbb H}}){\rm d}s
	+\Big[{\mathbb E}\int_0^t\|B(s,u(s))\|_{HS(U_0,{\mathbb H})}^{2}{\rm d}s\Big]^{\frac{p}{2}}\\
	\lesssim & \|u_0\|^p_{L^p(\Omega;{\mathbb H})}+\int_0^t (1+{\mathbb E}\|u(s)\|^{p}_{{\mathbb H}}){\rm d}s+\|Q^{\frac12}\|^{p}_{HS(U,H^{\gamma}(D))}\int_0^t (1+{\mathbb E}\|u(s)\|^{p}_{{\mathbb H}}){\rm d}s,
	\end{split}
	\end{equation*}
	where notation $A\lesssim B$ means that there exists a positive constant such that $A\leq CB$.
	
	By Gronwall's inequality, there exists a positive constant $C:=C(p,T,\|Q^{\frac12}\|_{HS(U,H^{\gamma}(D))})$ such that
	\[
	{\mathbb E}\|u(t)\|_{\mathbb H}^p\leq C(1+\|u_0\|^p_{L^p(\Omega;{\mathbb H})}),~\forall ~t\in[0,T].
	\]
	Therefore, we complete the proof.
\end{proof}

\begin{corollary}\label{well-posedness2}
	Under the same conditions of Theorem \ref{well-posedness}, there exists a constant $C:=C(p,T,u_0,Q)$ such that
		\begin{align}\label{est sup}
	{\mathbb E}\Big(\sup_{t\in[0,T]}\|u(t)\|^{p}_{\mathbb H}\Big)\leq C.
	\end{align}
\end{corollary}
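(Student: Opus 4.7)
The plan is to apply the supremum directly inside the mild-solution representation \eqref{mild sol} and then control the three resulting terms using the fact that the $C_0$-group $S(t)=e^{tM}$ is unitary on $\mathbb{H}$, the linear growth conditions \eqref{bound F} and \eqref{bound B} on $F$ and $B$, and the Burkholder--Davis--Gundy (BDG) inequality for the stochastic convolution. The a priori moment bound already established in Theorem \ref{well-posedness} will be used to close the estimate without invoking Gronwall once more.

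First, I would write
$$\|u(t)\|_{\mathbb H}\le \|S(t)u_0\|_{\mathbb H}+\Big\|\int_0^t S(t-s)F(s,u(s))\,{\rm d}s\Big\|_{\mathbb H}+\Big\|\int_0^t S(t-s)B(s,u(s))\,{\rm d}W(s)\Big\|_{\mathbb H}.$$
Since $S(t)$ is unitary, the first term is simply $\|u_0\|_{\mathbb H}$, whose $p$-th moment is finite by hypothesis. For the deterministic convolution, I would pull the norm inside the integral, use unitarity of $S(t-s)$ together with the linear growth \eqref{bound F}, then apply Hölder in time so that
$$\sup_{t\in[0,T]}\Big\|\int_0^t S(t-s)F(s,u(s))\,{\rm d}s\Big\|_{\mathbb H}^p\le T^{p-1}\int_0^T \|F(s,u(s))\|_{\mathbb H}^p\,{\rm d}s,$$
and the expectation of the right-hand side is bounded by $C(1+\sup_{s\le T}\mathbb{E}\|u(s)\|_{\mathbb H}^p)$, which is finite thanks to Theorem \ref{well-posedness}.

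The main obstacle is the stochastic convolution, because the BDG inequality applies to stochastic \emph{integrals}, not to convolutions with a varying upper limit appearing inside the integrand. The key trick is that $S$ is a $C_0$-group, so for every $t\in[0,T]$ we may factor $S(t-s)=S(t)S(-s)$ and push $S(t)$ outside to obtain
$$\int_0^t S(t-s)B(s,u(s))\,{\rm d}W(s)=S(t)\int_0^t S(-s)B(s,u(s))\,{\rm d}W(s).$$
Unitarity of $S(t)$ then gives
$$\sup_{t\in[0,T]}\Big\|\int_0^t S(t-s)B(s,u(s))\,{\rm d}W(s)\Big\|_{\mathbb H}=\sup_{t\in[0,T]}\Big\|\int_0^t S(-s)B(s,u(s))\,{\rm d}W(s)\Big\|_{\mathbb H},$$
and the right-hand side is the supremum of an $\mathbb H$-valued martingale, to which \cite[Theorem 4.36]{PZ2014} applies. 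Using unitarity of $S(-s)$ in the Hilbert--Schmidt norm and \eqref{bound B} yields
$$\mathbb{E}\sup_{t\in[0,T]}\Big\|\int_0^t S(t-s)B(s,u(s))\,{\rm d}W(s)\Big\|_{\mathbb H}^p\le C\,\mathbb{E}\Big(\int_0^T\|B(s,u(s))\|_{HS(U_0,\mathbb H)}^2\,{\rm d}s\Big)^{p/2},$$
which, after invoking \eqref{bound B} and Theorem \ref{well-posedness}, is bounded by a constant depending on $p,T,u_0,Q$.

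Collecting the three estimates and taking $p$-th moments produces a uniform-in-time bound of the form \eqref{est sup}. Note that, unlike the proof of Theorem \ref{well-posedness}, no Gronwall step is needed here: the moment bound from Theorem \ref{well-posedness} already provides a uniform control on $\sup_{s\le T}\mathbb{E}\|u(s)\|_{\mathbb H}^p$, so we simply plug it into the linear-growth estimates on $F$ and $B$. The entire argument hinges on the unitarity of $S(t)$, which is the special structural feature of the Maxwell operator that makes the stochastic convolution amenable to a direct BDG estimate.
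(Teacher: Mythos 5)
Your proof is correct, and it handles the one genuinely delicate term---the supremum of the stochastic convolution---by a different mechanism than the paper. The paper invokes the Burkholder--Davis--Gundy-type maximal inequality for stochastic \emph{convolutions} (\cite[Proposition 7.3]{PZ2014}, valid for contraction semigroups) and applies it directly to $\sup_{t\le T}\|\int_0^t S(t-s)B(s,u(s))\,{\rm d}W(s)\|_{\mathbb H}$. You instead exploit the fact that $S$ is a unitary \emph{group}: factoring $S(t-s)=S(t)S(-s)$, pulling the deterministic bounded operator $S(t)$ out of the stochastic integral, and using $\|S(t)\|_{{\mathcal L}({\mathbb H};{\mathbb H})}=1$ reduces the convolution to a genuine ${\mathbb H}$-valued martingale $\int_0^t S(-s)B(s,u(s))\,{\rm d}W(s)$, to which the ordinary BDG/Doob maximal inequality \cite[Theorem 4.36]{PZ2014} applies; unitarity of $S(-s)$ in the Hilbert--Schmidt norm then gives the same bound. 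Your route is more elementary in that it avoids the convolution-specific maximal inequality (whose proof for general contraction semigroups requires dilation or factorization arguments), but it is less general, since it hinges on the group structure of the Maxwell semigroup; the paper's route would survive if $M$ only generated a contraction semigroup. The remaining steps---unitarity for the initial term, H\"older plus linear growth \eqref{bound F} for the drift convolution, and closing the estimate by plugging in the uniform moment bound of Theorem \ref{well-posedness} rather than running Gronwall again---coincide with the paper's argument.
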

\begin{proof}
	The main step to derive \eqref{est sup} from the mild solution \eqref{mild sol} is that we need to deal with the stochastic integral
	\[
	{\mathbb E}\left[\sup_{t\in[0,T]}\left\| \int_0^t S(t-s)B(s,u(s)){\rm d}W(s) \right\|^{p}_{{\mathbb H}}\right].
	\]
	By using the Burkholder-Davis-Gundy-type inequality for stochastic convolution \cite[Proposition 7.3]{PZ2014}, we have
	\begin{equation*}
	\begin{split}
	{\mathbb E}\left[\sup_{t\in[0,T]}\left\| \int_0^t S(t-s)B(s,u(s)){\rm d}W(s) \right\|^{p}_{{\mathbb H}}\right]
	&\lesssim {\mathbb E}\int_{0}^{T}\|S(t-s)B(s,u(s))\|^{p}_{HS(U_0,{\mathbb H})}{\rm d}s\\
&	\lesssim \|Q^{\frac12}\|^{p}_{HS(U,H^{\gamma}(D))}\int_0^t (1+{\mathbb E}\|u(s)\|^{p}_{{\mathbb H}}){\rm d}s\leq C(p,T,u_0,Q),
	\end{split}
	\end{equation*}
	where we use the result of Theorem \ref{well-posedness} in the last step.
\end{proof}

\begin{remark}
	If we apply It\^o formula to the functional  ${\mathcal H}(u)=\|u\|^2_{\mathbb H}$, we may get the evolution of the electromagnetic energy of system \eqref{sM_equations}. In fact, the first and the second order derivatives of ${\mathcal H}(u)$ are
	\[D{\mathcal H}(u)(\psi)=2\langle u,\psi\rangle_{{\mathbb H}},
	\quad
	D^2{\mathcal H}(u)(\psi,\phi)=2\langle \psi,\phi\rangle_{{\mathbb H}}.
	\]
	It\^o formula (see for instance \cite[Theorem 4.32]{PZ2014}) gives us
	\begin{equation}
	\begin{split}
	{\mathcal H}(u(t))=&{\mathcal H}(u_0)+\int_{0}^{t}2\langle u(s),F(s,u(s))\rangle_{{\mathbb H}}+tr\big[\langle B(s,u(s))Q^{\frac12},(B(s,u(s))Q^{\frac12})^{*}\rangle_{{\mathbb H}}\big]{\rm d}s\\
	&+2\int_0\langle u(s),B(s,u(s)){\rm d}W(s)\rangle_{{\mathbb H}}.
	\end{split}
	\end{equation}
	We observe that if $F=0$ and $B$ is a constant operator, then the average energy ${\mathbb E}{\mathcal H}(u(t))$ grows linearly with respect to time $t$, see \cite[Theorem 2.1]{CHZ2016} for the analysis of stochastic Maxwell equations with additive noise.
\end{remark}

\section{Regularities of the solution of stochastic Maxwell equations}
This section is devoted to the regularity analysis for the solution of stochastic Maxwell equations \eqref{sto_max} or \eqref{sM_equations}, including the uniform boundedness of the solution in $L^{p}(\Omega;{\mathcal D}(M^k))$-norm and H\"older continuity of the solution in $L^{2}(\Omega;{\mathcal D}(M^{k-1}))$-norm for a given  integer $k\in{\mathbb N}$.

First, we present the assumptions on coefficients of equations \eqref{sto_max} in order to get enough space regularity of the solution.

\begin{assumption}\label{assump0}
	Assume the coefficients $\mu,\varepsilon\in C_{b}^{k}(D)$, and $\mu,\varepsilon\geq \delta>0$.
\end{assumption}
By this assumption, we know that  for any integer $\ell\leq k$,
\begin{eqnarray}
&&\|\partial^{\ell}_{\bf x}\varepsilon\|_{L^{\infty}(D)}+\|\partial^{\ell}_{\bf x}\mu\|_{L^{\infty}(D)}\leq K_{1},\label{bound coeffi}\\
&&\|\partial^{\ell}_{\bf x}\varepsilon^{-1}\|_{L^{\infty}(D)}+
\|\partial^{\ell}_{\bf x}\mu^{-1}\|_{L^{\infty}(D)}\leq K_2,\label{bound deri coeff}
\end{eqnarray}
where $K_2$ depends on $\delta$ and $K_1$.

\begin{assumption}\label{assump}
	Assume function ${\bf J}:[0,T]\times D\times {\mathbb R}^d\times {\mathbb R}^d\to {\mathbb R}^d$ is a smooth enough nonlinear function with bounded derivatives, i.e.,
	${\bf J}\in C_b^{1,k,k+1,k+1}([0,T]\times D\times {\mathbb R}^d\times {\mathbb R}^d;{\mathbb R}^d)$.
	Here ${\bf J}$ could be ${\bf J}_e$, ${\bf J}_e^r$, ${\bf J}_m$ or ${\bf J}_m^r$.
\end{assumption}

Throughout this paper $C_b^{\ell,m,n,n}$ denotes the set of vector-valued continuously differential functions $\Phi:(t,{\bf x},u,v)\in[0,T]\times D\times{\mathbb R}^d\times{\mathbb R}^d\to{\mathbb R}^d$ with uniformly bounded partial derivatives $\partial_{t}^{\ell_1}\Phi$, $\partial_{\bf x}^{m_2}\Phi$ and $\partial_{u}^{n_1}\partial_{v}^{n_2}\Phi$ for $\ell_1\leq\ell$, $m_1\leq m$ and $n_1+n_2\leq n$.

\begin{assumption}\label{assump 2}
	Assume that the operator $Q^{\frac12}\in HS(U,H^{k+\gamma}(D))$.
\end{assumption}

It follows from Assumptions \ref{assump0} and \ref{assump} that the drift term
$F$ satisfies
\begin{eqnarray}
&&\|F(t,u)\|_{{\mathcal D}(M^{\ell})}\leq C(1+\|u\|_{{\mathcal D}(M^{\ell})}),\label{linear growth of F Ml}\\
&&\|F(t,u)-F(s,v)\|_{{\mathcal D}(M^{\ell})}\leq C(|t-s|+\|u-v\|_{{\mathcal D}(M^{\ell})}),\label{Lip of F Ml}
\end{eqnarray}
where  $0\leq \ell\leq k$,  and $u,v\in {\mathcal D}(M^{\ell})$.
Here the constant $C$ may depend on $\delta$, $K_1$, $K_2$ in \eqref{bound coeffi} and \eqref{bound deri coeff},
the volume $|D|$ of the domain $D$, and the derivative bounds $L$ of functions ${\bf J}_e$ and ${\bf J}_m$. We only present the proof of \eqref{linear growth of F Ml} in the case $\ell=1$, for other cases and inequality \eqref{Lip of F Ml} could be proved by the same approach,
\begin{equation*}
\begin{split}
\|MF(t,u)\|_{{\mathbb H}}=&\Big(\int_{D}\varepsilon^{-1}|\nabla\times(\mu^{-1}{\bf J}_{m})|^2{\rm d}{\bf x}+\mu^{-1}|\nabla\times(\varepsilon^{-1}{\bf J}_{e})|^2{\rm d}{\bf x}\Big)^{\frac12}\\
\leq& \delta^{-\frac12}\Big[\int_{D}\delta^{-2}\big(|\nabla\times{\bf J}_m|^2+|\nabla\times{\bf J}_{e}|^2\big)+K_2^2\big(|{\bf J}_m|^2+|{\bf J}_e|^2\big){\rm d}{\bf x}\Big]^{\frac12}\\
\leq& \delta^{-\frac32}\Big[(6L^2|D|)^{\frac12}+\Big(6L^2K_1\int_{D}\mu^{-1}|\nabla\times{\bf E}|^2+\varepsilon^{-1}|\nabla\times{\bf H}|^2{\rm d}{\bf x}\Big)^{\frac12}\Big]\\
&+\delta^{-\frac12}K_2\Big[(6L^2|D|)^{\frac12}+\Big(6L^2\delta^{-1}\int_{D}\varepsilon|{\bf E}|^2+\mu|{\bf H}|^2{\rm d}{\bf x}\Big)^{\frac12}\Big]\\
\leq & C(1+\|u\|_{{\mathcal D}(M)}).
\end{split}
\end{equation*}

Under Assumptions \ref{assump0}, \ref{assump} and \ref{assump 2}, we get that for diffusion term $B$ and $0\leq \ell\leq k$, and for any $\gamma> d/2$,
\begin{eqnarray}
&&\|B(t,u)\|_{HS(U_0,{\mathcal D}(M^{\ell})}\leq C\|Q^{\frac12}\|_{HS(U,H^{\ell+\gamma}(D))}(1+\|u\|^2_{{\mathcal D}(M^{\ell})})^{\frac12},\label{linear growth of B Ml}\\
&&\|B(t,u)-B(s,v)\|_{HS(U_0,{\mathcal D}(M^{\ell})}\leq C\|Q^{\frac12}\|_{HS(U,H^{\ell+\gamma}(D))}(|t-s|+\|u-v\|_{{\mathcal D}(M^{\ell})}),\label{Lip of B Ml}
\end{eqnarray}
where $t,s\in[0,~T]$, and $u,v\in {\mathcal D}(M^{\ell})$.
Here the constant $C$ may depend on $\delta$, $K_1$, $K_2$ in \eqref{bound coeffi} and \eqref{bound deri coeff},
the volume $|D|$ of the domain $D$, and the derivative bounds $L$ of functions ${\bf J}^r_e$ and ${\bf J}^r_m$. We just present the proof of \eqref{linear growth of B Ml} in the case $\ell=1$, for other cases and inequality \eqref{Lip of B Ml} could be proved by the same approach,
\begin{align*}
\sum_{j=1}^{\infty}&\|M(BQ^{\frac12}e_j)\|^2_{\mathbb H}\\
=&\sum_{j=1}^{\infty}\int_{D}\varepsilon^{-1}|\nabla\times(\mu^{-1}{\bf J}_m^rQ^{\frac12}e_j)|^2+\mu^{-1}|\nabla\times(\varepsilon^{-1}{\bf J}_e^rQ^{\frac12}e_j)|^2{\rm d}{\bf x}\\
\leq & \delta^{-1}\sum_{j=1}^{\infty}\int_{D}\delta^{-2}\|Q^{\frac12}e_j\|^2_{L^{\infty}(D)}\big(|\nabla\times{\bf J}_{m}^{r}|^2+|\nabla\times{\bf J}_{e}^{r}|^2\big){\rm d}{\bf x}\\
&+\delta^{-1}\sum_{j=1}^{\infty}\int_{D}(K_2^2\|Q^{\frac12}e_j\|^2_{L^{\infty}(D)}+\delta^{-2}\|\nabla Q^{\frac12}e_j\|^2_{L^{\infty}(D)})\big(|{\bf J}_{m}^{r}|^2+|{\bf J}_{e}^{r}|^2\big){\rm d}{\bf x}\\
\leq & 6L^2\delta^{-3}\|Q^{\frac12}\|^2_{HS(U,H^{\gamma}(D))}\int_{D}(1+|\nabla\times{\bf E}|^2+|\nabla\times{\bf H}|^2){\rm d}{\bf x}\\
&+\delta^{-1}\big(K_2^2\|Q^{\frac12}\|^2_{HS(U,H^{\gamma}(D))}+\delta^{-2}\|Q^{\frac12}\|^2_{HS(U,H^{1+\gamma}(D))}\big)\int_{D}(1+|{\bf E}|^2+|{\bf H}|^2){\rm d}{\bf x}\\
\leq & C\|Q^{\frac12}\|^{2}_{HS(U,H^{1+\gamma}(D))}(1+\|u\|^2_{{\mathcal D}(M)}),
\end{align*}
where we have used the Sobolev embedding $H^{\gamma}(D)\subset L^{\infty}(D)$ for any $\gamma> d/2$.

\subsection{Uniform boundedness of the solution}

We are now ready to establish establish the uniform boundedness of the solution   of stochastic Maxwell equations \eqref{sM_equations} in $L^{p}(\Omega;{\mathcal D}(M^k))$-norm, which is stated in the following theorem.
\begin{proposition}\label{regularity}
	Let Assumptions \ref{assump0}-\ref{assump 2} be fulfilled, and suppose that $u_0$ be an ${\mathcal F}_0$-measurable ${\mathbb H}$-valued random variable satisfying $\|u_0\|_{L^{p}(\Omega;{\mathcal D}(M^k))}<\infty$ for some $p\geq 2$. Then the mild solution \eqref{mild sol} satisfies
	\begin{eqnarray}
	\sup_{t\in[0,T]}\|u(t)\|_{L^{p}(\Omega;{\mathcal D}(M^k))}\leq C(1+\|u_0\|_{L^p(\Omega;{{\mathcal D}(M^k)})}),
	\end{eqnarray}
	where the positive constant $C$ may depend on $p$, $T$, and $\|Q^{\frac12}\|_{HS(U,H^{k+\gamma}(D))}$.
\end{proposition}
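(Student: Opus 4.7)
The plan is to upgrade the argument already used for Theorem 2.1 (which bounded $u$ in $L^p(\Omega;\mathbb H)$) to higher regularity by working with the operator $M^k$ applied to the mild solution. The key structural fact I would exploit is that since $M$ generates the unitary $C_0$-group $S(t) = e^{tM}$, the semigroup $S(t)$ leaves $\mathcal{D}(M^k)$ invariant and commutes with $M^k$ on this domain, and moreover $S(t)$ is an isometry on $\mathbb H$, so $\|S(t)v\|_{\mathcal{D}(M^k)} = \|v\|_{\mathcal{D}(M^k)}$ for every $v\in\mathcal{D}(M^k)$.

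Starting from the mild solution form \eqref{mild sol}, I would apply $M^k$ formally and combine with the $\mathbb H$-norm bound to estimate
\[
\|u(t)\|_{\mathcal{D}(M^k)} \le \|S(t)u_0\|_{\mathcal{D}(M^k)} + \Bigl\|\int_0^t S(t-s)F(s,u(s))\,{\rm d}s\Bigr\|_{\mathcal{D}(M^k)} + \Bigl\|\int_0^t S(t-s)B(s,u(s))\,{\rm d}W(s)\Bigr\|_{\mathcal{D}(M^k)}.
\]
Taking the $p$-th power and expectation, the first term is simply $\|u_0\|_{\mathcal{D}(M^k)}^p$ by unitarity. For the deterministic integral, I would pull $M^k$ through the semigroup, apply Jensen's inequality in time, and invoke the linear growth bound \eqref{linear growth of F Ml} to obtain a term of the form $C\int_0^t(1+\mathbb E\|u(s)\|_{\mathcal{D}(M^k)}^p)\,{\rm d}s$.

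For the stochastic integral, I would again commute $M^k$ with $S(t-s)$ and then invoke the Burkholder--Davis--Gundy inequality in Hilbert space to bound it by
\[
C\,\mathbb E\Bigl(\int_0^t \|S(t-s)B(s,u(s))\|_{HS(U_0,\mathcal{D}(M^k))}^2 \,{\rm d}s\Bigr)^{p/2},
\]
then use that $S(t-s)$ is an isometry in each $\mathcal{D}(M^k)$-component and apply \eqref{linear growth of B Ml}, which produces the factor $\|Q^{1/2}\|_{HS(U,H^{k+\gamma}(D))}^p$ along with another term of the form $\int_0^t(1+\mathbb E\|u(s)\|_{\mathcal{D}(M^k)}^p)\,{\rm d}s$ (Hölder in $s$ converts the $p/2$-power back into a $p$-moment under the integral). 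Combining the three estimates and invoking Gronwall's lemma yields the claimed bound.

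The main obstacle I anticipate is rigorously justifying the commutation of $M^k$ with $S(t-s)$ inside the stochastic convolution and ensuring the stochastic integral genuinely takes values in $\mathcal{D}(M^k)$. The standard remedy is a truncation/approximation argument: first establish the estimate for the Yosida approximations $M_\lambda := \lambda M(\lambda - M)^{-1}$, which are bounded operators commuting with $S(t)$ and for which all computations are unambiguous, then pass to the limit using the closedness of $M^k$ together with the uniform-in-$\lambda$ bound produced by Gronwall. A secondary subtlety is verifying \eqref{linear growth of B Ml} at level $\ell=k$, which the excerpt demonstrates only for $\ell=1$; for general $k$ the same Leibniz-type expansion applied to $\nabla\times(\cdot)$ iteratively, combined with the Sobolev embedding $H^{k+\gamma}(D)\hookrightarrow W^{k,\infty}(D)$ for $\gamma>d/2$, produces the requisite bound, and this is where the hypothesis $Q^{1/2}\in HS(U,H^{k+\gamma}(D))$ from Assumption \ref{assump 2} is essential.
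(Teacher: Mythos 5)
Your proposal is correct and follows essentially the same route as the paper: decompose the mild solution into the initial-data term, the deterministic convolution, and the stochastic convolution, use unitarity of $S(t)$ on ${\mathcal D}(M^k)$ together with the linear growth bounds \eqref{linear growth of F Ml} and \eqref{linear growth of B Ml}, apply the Burkholder--Davis--Gundy inequality to the stochastic term, and close with Gronwall. The additional remarks on Yosida approximation and on extending \eqref{linear growth of B Ml} to general $\ell=k$ address details the paper leaves implicit but do not change the argument.
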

\begin{proof}
		Under Assumptions \ref{assump0}-\ref{assump 2}, we see that from \eqref{linear growth of F Ml} and \eqref{linear growth of B Ml}
	$F$ and $B$ are linear growth functions. Using the Burkholder-Davis-Gundy-type inequality for stochastic integrals \cite[Theorem 4.36]{PZ2014},  we have
	 for the mild solution \eqref{mild sol},
	\begin{equation*}
	\begin{split}
	{\mathbb E}\|u(t)\|^{p}_{{\mathcal D}(M^k)}	\lesssim& {\mathbb E}\|S(t)u_0\|^{p}_{{\mathcal D}(M^k)}
	+{\mathbb E}\int_0^t\|S(t-s)F(s,u(s))\|^{p}_{{\mathcal D}(M^k)}{\rm d}s\\
	&	+{\mathbb E}\Big\|\int_0^t S(t-s)B(s,u(s)){\rm d}W(s)\Big\|^{p}_{{\mathcal D}(M^k)}\\
	\lesssim& {\mathbb E}\|u_0\|^{p}_{{\mathcal D}(M^k)}+\int_0^t (1+{\mathbb E}\|u(s)\|^{p}_{{\mathcal D}(M^k)}){\rm d}s
	+\Big[{\mathbb E}\int_0^t\|B(s,u(s))\|_{HS(U_0,{{\mathcal D}(M^k)})}^{2}{\rm d}s\Big]^{\frac{p}{2}}\\
	\lesssim & \|u_0\|^p_{L^p(\Omega;{{\mathcal D}(M^k)})}+\int_0^t (1+{\mathbb E}\|u(s)\|^{p}_{{\mathcal D}(M^k)}){\rm d}s\\
	&+\|Q^{\frac12}\|^{p}_{HS(U,H^{k+\gamma}(D))}\int_0^t (1+{\mathbb E}\|u(s)\|^{p}_{{\mathcal D}(M^k)}){\rm d}s.
	\end{split}
	\end{equation*}
	By Gronwall's inequality, there exists a positive constant $C:=C(p,T,\|Q^{\frac12}\|_{HS(U,H^{k+\gamma}(D))})$ such that
	\[
	{\mathbb E}\|u(t)\|_{{\mathcal D}(M^k)}^p\leq C(1+\|u_0\|^p_{L^p(\Omega;{{\mathcal D}(M^k)})}),~\forall ~t\in[0,T].
	\]
	Therefore, we complete the proof.
\end{proof}

\begin{corollary}\label{regularity2}
	Under the same conditions of Proposition \ref{regularity}, there exists a constant $C:=C(p,T,u_0,Q)$ such that
	\begin{align}\label{est sup_Mk}
	{\mathbb E}\Big(\sup_{t\in[0,T]}\|u(t)\|^{p}_{{\mathcal D}(M^k)}\Big)\leq C.
	\end{align}
\end{corollary}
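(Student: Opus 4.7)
The plan is to mirror the argument of Corollary \ref{well-posedness2} line by line, only now carrying the stronger norm $\|\cdot\|_{{\mathcal D}(M^k)}$ through every estimate. The essential structural fact that makes this work is that $S(t)=e^{tM}$ is a unitary $C_0$-group on $\mathbb{H}$ which commutes with $M$ (and hence with $M^k$) on ${\mathcal D}(M^k)$; consequently $\|S(t)v\|_{{\mathcal D}(M^k)}=\|v\|_{{\mathcal D}(M^k)}$ for all $v\in{\mathcal D}(M^k)$ and $t\ge0$. This invariance is what allows us to pull the $\mathcal{D}(M^k)$-norm inside the semigroup in each of the three pieces of the mild formula \eqref{mild sol}.

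First I would start from \eqref{mild sol} and split
\[
\sup_{t\in[0,T]}\|u(t)\|_{{\mathcal D}(M^k)}^{p} \lesssim \sup_{t\in[0,T]}\|S(t)u_0\|_{{\mathcal D}(M^k)}^{p} + \sup_{t\in[0,T]}\Big\|\int_0^t S(t-s)F(s,u(s)){\rm d}s\Big\|_{{\mathcal D}(M^k)}^{p} + \sup_{t\in[0,T]}\Big\|\int_0^t S(t-s)B(s,u(s)){\rm d}W(s)\Big\|_{{\mathcal D}(M^k)}^{p}.
\]
The first term equals $\|u_0\|_{{\mathcal D}(M^k)}^{p}$ by unitarity. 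For the second term, I would pull the $\mathcal{D}(M^k)$-norm inside the Bochner integral, apply Hölder in $s$, and then use the linear growth bound \eqref{linear growth of F Ml}, which reduces the expectation to $\int_0^T(1+\mathbb{E}\|u(s)\|_{{\mathcal D}(M^k)}^{p}){\rm d}s$, a quantity already controlled by Proposition \ref{regularity}.

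The main work is the stochastic convolution, and this is precisely where I would invoke \cite[Proposition 7.3]{PZ2014} (the Burkholder-Davis-Gundy-type inequality for stochastic convolutions), applied in the Hilbert space ${\mathcal D}(M^k)$. Since $S(\cdot)$ is unitary on ${\mathcal D}(M^k)$, the proposition gives
\[
\mathbb{E}\Big[\sup_{t\in[0,T]}\Big\|\int_0^t S(t-s)B(s,u(s)){\rm d}W(s)\Big\|_{{\mathcal D}(M^k)}^{p}\Big] \lesssim \mathbb{E}\int_0^T\|B(s,u(s))\|_{HS(U_0,{\mathcal D}(M^k))}^{p}{\rm d}s.
\]
Applying the linear growth estimate \eqref{linear growth of B Ml} then yields the bound $\|Q^{1/2}\|_{HS(U,H^{k+\gamma}(D))}^{p}\int_0^T(1+\mathbb{E}\|u(s)\|_{{\mathcal D}(M^k)}^{p}){\rm d}s$.

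Finally, combining the three pieces and invoking Proposition \ref{regularity} to bound $\sup_{s\in[0,T]}\mathbb{E}\|u(s)\|_{{\mathcal D}(M^k)}^{p}$ by $C(1+\|u_0\|_{L^p(\Omega;{\mathcal D}(M^k))}^{p})$, I obtain a finite constant $C=C(p,T,u_0,Q)$ bounding the left-hand side. The only subtlety worth flagging is that in the stochastic convolution step one must verify that $S(\cdot)$ extends to a $C_0$-semigroup on ${\mathcal D}(M^k)$ with the same contraction/unitarity property, so that the factorization method underlying \cite[Proposition 7.3]{PZ2014} applies verbatim; this follows from the commutativity $M^kS(t)=S(t)M^k$ on ${\mathcal D}(M^k)$ together with the closedness of $M^k$. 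No new analytic ingredient beyond those already established in Section 2 and in Proposition \ref{regularity} is required.
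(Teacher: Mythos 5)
Your proposal follows essentially the same route as the paper's proof: decompose the mild solution, control the deterministic terms via unitarity of $S(t)$ on ${\mathcal D}(M^k)$ and the linear growth of $F$, and handle the stochastic convolution with the Burkholder--Davis--Gundy-type inequality of \cite[Proposition 7.3]{PZ2014} in the Hilbert space ${\mathcal D}(M^k)$, closing the estimate with Proposition \ref{regularity}. The only difference is that you explicitly flag (and justify) that $S(\cdot)$ restricts to a unitary $C_0$-group on ${\mathcal D}(M^k)$ so that the stochastic-convolution inequality applies there, a point the paper leaves implicit; this is a correct and worthwhile clarification rather than a deviation.
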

\begin{proof}
	The main step to derive \eqref{est sup_Mk} from the mild solution \eqref{mild sol} is that we need to deal with the stochastic integral
	\[
	{\mathbb E}\left[\sup_{t\in[0,T]}\left\| \int_0^t S(t-s)B(s,u(s)){\rm d}W(s) \right\|^{p}_{{\mathcal D}(M^k)}\right].
	\]
	By using the Burkholder-Davis-Gundy-type inequality for stochastic convolution \cite[Proposition 7.3]{PZ2014}, we have
	\begin{equation*}
	\begin{split}
	{\mathbb E}\left[\sup_{t\in[0,T]}\left\| \int_0^t S(t-s)B(s,u(s)){\rm d}W(s) \right\|^{p}_{{\mathcal D}(M^k)}\right]
	&\lesssim {\mathbb E}\int_{0}^{T}\|S(t-s)B(s,u(s))\|^{p}_{HS(U_0,{\mathcal D}(M^k))}{\rm d}s\\
	&	\lesssim \|Q^{\frac12}\|^{p}_{HS(U,H^{k+\gamma}(D))}\int_0^t (1+{\mathbb E}\|u(s)\|^{p}_{{\mathcal D}(M^k)}){\rm d}s\\
	&\leq C(p,T,u_0,Q),
	\end{split}
	\end{equation*}
	where we use the result of Proposition \ref{regularity} in the last step.
\end{proof}

\subsection{H\"older continuity of the solution}

In this subsection, we shall obtain the H\"older continuity of the solution of stochastic Maxwell equations \eqref{sM_equations} in $L^{2}(\Omega;{\mathcal D}(M^{k-1}))$-norm. To this end, we first give an very useful lemma.

\begin{lemma}\label{est ope}
	For the semigroup $\{S(t);t\geq 0\}$ on ${\mathbb H}$, it holds that
	\begin{equation}
	\|S(t)-Id\|_{{\mathcal L}({\mathcal D}(M);{\mathbb H})}\leq Ct,
	\end{equation}
	where the constant $C$ does not depend on $t$.
\end{lemma}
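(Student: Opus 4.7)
The plan is to use the standard semigroup identity that expresses $S(t)u - u$ as an integral of $S(s)Mu$ over $s \in [0,t]$, and then exploit the fact that $S(t) = e^{tM}$ is a unitary $C_0$-group on $\mathbb{H}$ (so it is an isometry in the $\mathbb{H}$-norm). This will immediately give the linear bound in $t$, with no use of regularity beyond $\mathcal{D}(M)$.

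More concretely, I would first fix $u \in \mathcal{D}(M)$ and recall from basic $C_0$-semigroup theory that the map $t \mapsto S(t)u$ is continuously differentiable into $\mathbb{H}$ with derivative $MS(t)u = S(t)Mu$. Integrating from $0$ to $t$ yields the Duhamel-type identity
\begin{equation*}
S(t)u - u = \int_0^t S(s) M u \,\mathrm{d}s.
\end{equation*}
Taking the $\mathbb{H}$-norm on both sides, using the triangle inequality for the Bochner integral, and invoking the unitarity $\|S(s)v\|_{\mathbb{H}} = \|v\|_{\mathbb{H}}$ (which follows from Stone's theorem applied to the skew-adjoint operator $M$, as stated in Section~2), I get
\begin{equation*}
\|S(t)u - u\|_{\mathbb{H}} \le \int_0^t \|S(s) M u\|_{\mathbb{H}} \,\mathrm{d}s = t\, \|Mu\|_{\mathbb{H}} \le t\, \|u\|_{\mathcal{D}(M)}.
\end{equation*}
Dividing by $\|u\|_{\mathcal{D}(M)}$ and taking the supremum over $u \in \mathcal{D}(M) \setminus \{0\}$ gives the claimed operator norm bound with $C = 1$.

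There is no real obstacle here: the only conceptual point is to remember that on $\mathcal{D}(M)$ the orbit is differentiable and the derivative equals $S(s)Mu$ (not merely $MS(s)u$), which is the property that allows passing the $\mathbb{H}$-norm inside the integrand and then using unitarity to eliminate the $s$-dependence. Strictly speaking, one should note that the integral identity extends to $u \in \mathcal{D}(M)$ by the standard argument (the right-hand side is well-defined as a Bochner integral in $\mathbb{H}$, and the derivative of $S(t)u$ at $0$ equals $Mu$ by definition of the generator), and the stated bound is then uniform on the unit ball of $\mathcal{D}(M)$, which is precisely the operator norm in $\mathcal{L}(\mathcal{D}(M);\mathbb{H})$.
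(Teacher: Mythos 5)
Your proof is correct, and it takes a genuinely different (and more direct) route than the paper. You use the generator identity $S(t)u-u=\int_0^t S(s)Mu\,\mathrm{d}s$ for $u\in\mathcal{D}(M)$ together with unitarity of $S(s)$ on $\mathbb{H}$, which immediately yields $\|S(t)u-u\|_{\mathbb{H}}\leq t\|Mu\|_{\mathbb{H}}\leq t\|u\|_{\mathcal{D}(M)}$, i.e.\ the operator-norm bound with $C=1$. The paper instead works with the linear system $\partial_t u=Mu$, first verifies $\|S(t)\|_{\mathcal{L}(\mathbb{H};\mathbb{H})}=1$ via $\langle Mu,u\rangle_{\mathbb{H}}=0$, and then tests the identity $u(t)-u_0=\int_0^t Mu(s)\,\mathrm{d}s$ against $u(t)$ in the $\mathbb{H}$-inner product, using the polarization identity $\langle u(t)-u_0,u(t)\rangle_{\mathbb{H}}=\tfrac12\|u(t)-u_0\|_{\mathbb{H}}^2$ (the other two terms cancel by norm conservation). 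It is worth noting that the paper's chain of estimates ends with $\tfrac12\|u(t)-u_0\|_{\mathbb{H}}^2\leq C\|u_0\|_{\mathcal{D}(M)}^2\,t$, which as written only delivers the rate $t^{1/2}$ rather than the claimed rate $t$; your argument avoids this issue entirely because you put the norm inside the time integral before estimating, rather than squaring the increment. In short, your approach is both simpler and sharper, and it is the standard textbook proof of this estimate; the only point to keep explicit (which you do) is that differentiability of the orbit and the identity $\frac{\mathrm{d}}{\mathrm{d}s}S(s)u=S(s)Mu$ hold for all $u\in\mathcal{D}(M)$.
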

\begin{proof}
	We start from the system
	\begin{equation}\label{linear Max}
	\begin{cases}
	\frac{\partial u(t)}{\partial t}=Mu(t),~t\in(0,T],\\
	u(0)=u_0.
	\end{cases}
	\end{equation}
	Thus
	\[
	\frac{\partial}{\partial t}\|u(t)\|^2_{\mathbb H}=2\left\langle\frac{\partial u(t)}{\partial t},~u(t) \right\rangle_{\mathbb H}=2\left\langle Mu(t),~u(t) \right \rangle_{{\mathbb H}}=0,
	\]
	leads to
	\[
	\|u(t)\|_{\mathbb H}=	\|S(t)u_0\|_{\mathbb H}=\|u_0\|_{\mathbb H},
	\]
	which means $\|S(t)\|_{{\mathcal L}({\mathbb H};{\mathbb H})}=1$.
	
	Similarly, consider
	\[
	\frac{\partial}{\partial t}\|Mu(t)\|^2_{\mathbb H}=2\left\langle M\frac{\partial u(t)}{\partial t},~Mu(t) \right\rangle_{\mathbb H}=2\left\langle M^2u(t),~Mu(t) \right \rangle_{{\mathbb H}}=0,
	\]
	which leads to $\|S(t)\|_{{\mathcal L}({\mathcal D}(M);{\mathcal D}(M))}=1$.
	
	The assertion in this lemma is equivalent to
	$$\|u(t)-u_0\|_{\mathbb H}=\|(S(t)-Id)u_0\|_{\mathbb H}\leq C\|u_0\|_{{\mathcal D}(M)}t.$$
	In fact, we can conclude from \eqref{linear Max} that
	\begin{equation*}
	\begin{split}
	\langle u(t)-u_0,~u(t)\rangle_{{\mathbb H}}=\left\langle  \int_0^t Mu(s){\rm d}s,~u(t) \right\rangle_{{\mathbb H}},
	\end{split}
	\end{equation*}
	where the term in left-hand side is
	\[
	\frac12\Big(\|u(t)\|^2_{\mathbb H}-\|u_0\|^2_{\mathbb H}+\|u(t)-u_0\|^2_{\mathbb H}\Big)=\frac12\|u(t)-u_0\|^2_{\mathbb H},
	\]
	and the term in right-hand side can be estimated by
	\begin{equation*}
	\begin{split}
	\left\langle  \int_0^t Mu(s){\rm d}s,~u(t) \right\rangle_{{\mathbb H}}
	&\leq \int_{0}^{t}\|Mu(s)\|_{\mathbb H}\|u(t)\|_{\mathbb H}{\rm d}s\\
	&\leq \|u_0\|_{\mathbb H}\int_0^t\|u(s)\|_{{\mathcal D}(M)}{\rm d}s
	\leq C\|u_0\|^2_{{\mathcal D}(M)}t.
	\end{split}
	\end{equation*}
	Therefore we complete the proof.
\end{proof}

\begin{proposition}\label{holder}
	Under the same assumption of Proposition \ref{regularity}, we have for $0\leq t,s\leq T$,
	\begin{eqnarray}
	{\mathbb E}\|u(t)-u(s)\|_{{\mathcal D}(M^{k-1})}^{p}\leq C|t-s|^{p/2},\\
	\|{\mathbb E}(u(t)-u(s)\|_{{\mathcal D}(M^{k-1})}\leq C|t-s|,
	\end{eqnarray}
	where the positive constant $C$ may depend on $p$, $T$,  $\|Q^{\frac12}\|_{HS(U,H^{k+\gamma}(D))}$, and $\|u_0\|_{L^p(\Omega;{\mathcal D}(M^k))}$.
\end{proposition}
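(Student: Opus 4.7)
The plan is to work from the mild-solution identity and split the increment $u(t)-u(s)$ (for $0\le s<t\le T$) into three pieces. Writing
\[
u(t)=S(t-s)u(s)+\int_s^t S(t-r)F(r,u(r))\,{\rm d}r+\int_s^t S(t-r)B(r,u(r))\,{\rm d}W(r),
\]
I would decompose
\[
u(t)-u(s)=\underbrace{(S(t-s)-Id)u(s)}_{I_1}+\underbrace{\int_s^t S(t-r)F(r,u(r))\,{\rm d}r}_{I_2}+\underbrace{\int_s^t S(t-r)B(r,u(r))\,{\rm d}W(r)}_{I_3}.
\]
The first preparation is to upgrade Lemma \ref{est ope} from $\mathcal{L}(\mathcal{D}(M);\mathbb{H})$ to $\mathcal{L}(\mathcal{D}(M^k);\mathcal{D}(M^{k-1}))$: since $M$ commutes with $S(t)$ on $\mathcal{D}(M^k)$, one has $M^{k-1}(S(t)-Id)v=(S(t)-Id)M^{k-1}v$, so applying Lemma \ref{est ope} to $M^{k-1}v$ gives $\|(S(t)-Id)v\|_{\mathcal{D}(M^{k-1})}\le C\,t\,\|v\|_{\mathcal{D}(M^k)}$.

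For the $L^p$ estimate I would bound each piece in $\mathcal{D}(M^{k-1})$-norm. For $I_1$ the extended lemma combined with Proposition \ref{regularity} yields ${\mathbb E}\|I_1\|_{\mathcal{D}(M^{k-1})}^p\le C|t-s|^p$. For $I_2$, since $S(\cdot)$ is a unitary group on $\mathcal{D}(M^{k-1})$, Hölder's inequality and the linear growth \eqref{linear growth of F Ml} give
\[
{\mathbb E}\|I_2\|_{\mathcal{D}(M^{k-1})}^p\le |t-s|^{p-1}\int_s^t {\mathbb E}\|F(r,u(r))\|_{\mathcal{D}(M^{k-1})}^p\,{\rm d}r\le C|t-s|^p.
\]
For $I_3$, the Burkholder--Davis--Gundy inequality in $\mathcal{D}(M^{k-1})$ together with \eqref{linear growth of B Ml} and the $L^p(\Omega;\mathcal{D}(M^k))$-bound of $u$ furnishes
\[
{\mathbb E}\|I_3\|_{\mathcal{D}(M^{k-1})}^p\le C\,{\mathbb E}\Bigl(\int_s^t\|B(r,u(r))\|_{HS(U_0,\mathcal{D}(M^{k-1}))}^2\,{\rm d}r\Bigr)^{p/2}\le C|t-s|^{p/2}.
\]
Since $|t-s|\le T$, the first two contributions are dominated by $|t-s|^{p/2}$, which yields the claimed $|t-s|^{p/2}$ rate.

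For the mean estimate I would take expectations before estimating. The martingale property kills $I_3$, so only the deterministic terms remain:
\[
{\mathbb E}(u(t)-u(s))=(S(t-s)-Id)\,{\mathbb E} u(s)+\int_s^t S(t-r)\,{\mathbb E} F(r,u(r))\,{\rm d}r.
\]
Applying the extended Lemma \ref{est ope} to the first term and bounding the second by Jensen's inequality together with \eqref{linear growth of F Ml} and Proposition \ref{regularity} yields the linear-in-$|t-s|$ rate. The main technical point is simply the commutation of $M^{k-1}$ with $S(t)$ that powers the upgraded semigroup estimate; no deeper obstacle arises, since all the required regularity for $u$, $F$, and $B$ is already delivered by Proposition \ref{regularity} and the growth bounds \eqref{linear growth of F Ml}--\eqref{linear growth of B Ml}.
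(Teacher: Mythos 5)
Your proposal is correct and follows essentially the same route as the paper: the identical three-term decomposition via the mild solution, the operator estimate of Lemma \ref{est ope} for the semigroup increment, H\"older plus the linear growth of $F$ for the drift term, Burkholder--Davis--Gundy plus the linear growth of $B$ for the stochastic convolution, and taking expectations first (killing the martingale term) for the mean estimate. The only difference is that you explicitly justify upgrading Lemma \ref{est ope} to $\mathcal{L}(\mathcal{D}(M^k);\mathcal{D}(M^{k-1}))$ via commutation of $M^{k-1}$ with $S(t)$, a step the paper uses implicitly without comment.
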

\begin{proof}
	From equation \eqref{mild sol}, we have
	\begin{equation}\label{u(t)-u(s)_eq}
	\begin{split}
	u(t)-u(s)=&\left(S(t-s)-I\right)u(s)+\int_{s}^{t}S(t-r)F(r,u(r)){\rm d}r\\[1.5mm]
	&	+\int_{s}^{t}S(t-r)B(r,u(r)){\rm d}W(r).
	\end{split}
	\end{equation}
	Therefore,
	\begin{equation}
	\begin{split}
	\mathbb{E}&\left\|u(t)-u(s)\right\|_{{\mathcal D}(M^{k-1})}^p\lesssim\mathbb{E}\left\|(S(t-s)-I)u(s)\right\|_{{\mathcal D}(M^{k-1})}^p\\[1.5mm]
	&+\mathbb{E}\left\|\int_{s}^{t}S(t-r)F(r,u(r)){\rm d}r\right\|_{{\mathcal D}(M^{k-1})}^p
	+\mathbb{E}\left\|\int_{s}^{t}S(t-r)B(r,u(r)){\rm d}W(r)\right\|_{{\mathcal D}(M^{k-1})}^p\\[1.5mm]
	&:=I+II+III.
	\end{split}
	\end{equation}
	
	For the first term $I$, we have
	\begin{equation*}
	\begin{split}
	I=\mathbb{E}\left\|(S(t)-S(s))u(s)\right\|_{{\mathcal D}(M^{k-1})}^p&\leq\left\|S(t)-S(s)\right\|_{{\mathcal L}({\mathcal D}(M^{k}),{\mathcal D}(M^{k-1}))}^p\mathbb{E}\left\|u(s)\right\|_{{\mathcal D}(M^{k})}^p\\[1mm]
	&\leq C(t-s)^p\|u(s)\|_{L^p(\Omega;{\mathcal D}(M^{k}))}^p,
	\end{split}
	\end{equation*}
	where we use the estimate $\|S(t)-I\|_{{\mathcal L}({\mathcal D}(M),{\mathbb H})}\leq Ct$ (see Lemma \ref{est ope}) in the last step.
	From Proposition \ref{regularity}, we have
	\begin{equation}\label{1I}
	I\leq C(1+\|u_0\|^p_{L^p(\Omega;{\mathcal D}(M^k))}) (t-s)^p.
	\end{equation}
	
	For the second term $II$, it holds
	\begin{equation}\label{2II}
	\begin{split}
	II&=\mathbb{E}\left\|\int_{s}^{t}S(t-r)F(r,u(r)){\rm d}r\right\|_{{\mathcal D}(M^{k-1})}^p\\
	&\lesssim(t-s)^{p-1}\int_{s}^{t}\mathbb{E}\left\|S(t-r)F(r,u(r))\right\|_{{\mathcal D}(M^{k-1})}^p{\rm d}r\\[1mm]
	&\leq(t-s)^{p-1}\int_{s}^{t}\mathbb{E}\|S(t-r)\|^{p}_{{\mathcal L}({\mathcal D}(M^{k-1}),{\mathcal D}(M^{k-1}))}\|F(r,u(r))\|_{{\mathcal D}(M^{k-1})}^p{\rm d}r\\[1mm]
	&=(t-s)^{p-1}\int_{s}^{t}\mathbb{E}\|F(r,u(r))\|_{{\mathcal D}(M^{k-1})}^p{\rm d}r\\
	&	\leq (t-s)^{p-1}\int_s^t {\mathbb E}(1+\|u(r)\|_{{\mathcal D}(M^{k-1})}^p){\rm d}r
	\leq C(t-s)^p,
	\end{split}
	\end{equation}
	where in the last step, we utilize the estimate $\sup_{t\in[0,T]}{\mathbb E}\|u(t)\|_{{\mathcal D}(M^{k-1})}^p\leq C(1+\|u_0\|_{L^p(\Omega;{\mathcal D}(M^{k-1}))}^p)$ with the constant $C:=C(p,T,\|Q^{\frac12}\|_{HS(U,H^{k-1+\gamma}(D))}).$
	
	Using the Burkholder-Davis-Gundy-type inequality for stochastic integrals \cite[Theorem 4.36]{PZ2014}, we obtain,
	\begin{equation}\label{3III}
	\begin{split}
	III&\lesssim\Big(\int_s^t{\mathbb E}\left\|S(t-r)B(r,u(r))\right\|_{HS(U_0,{\mathcal D}(M^{k-1}))}^2{\rm d}r\Big)^{p/2}\\
	&	\leq \Big(\int_s^t \|S(t-r)\|^2_{{\mathcal L}({\mathcal D}(M^{k-1}),{\mathcal D}(M^{k-1}))}{\mathbb E}\|B(r,u(r))\|^2_{HS(U_0,{\mathcal D}(M^{k-1})}{\rm d}r\Big)^{p/2}\\
	&\leq\|Q^{\frac12}\|^{p}_{HS(U,H^{k-1+\gamma}(D))} \Big(\int_s^t{\mathbb E}(1+\|u(r)\|_{{\mathcal D}(M^{k-1})}^2){\rm d}r\Big)^{p/2}
	\leq C(t-s)^{p/2}.
	\end{split}
	\end{equation}
	Combining equations \eqref{1I}, \eqref{2II} and \eqref{3III} and based on the assumption $u_0\in {\mathcal D}(M^k)$, we obtain the first result.
	
	To get the second assertion, we take the expectation to the both sides of equation \eqref{u(t)-u(s)_eq}, it yields
	\begin{equation}
	\begin{split}
	\mathbb{E}(u(t)-u(s))=\mathbb{E}\left(\left(S(t-s)-I\right)u(s)\right)+\mathbb{E}\left(\int_{s}^{t}S(t-r)F(r,u(r)){\rm d}r\right)
	\end{split}
	\end{equation}
	Therefore, similar as \eqref{1I} and \eqref{2II} we get
	\begin{equation}
	\begin{split}
	\|\mathbb{E}(u(t)-u(s))\|_{{\mathcal D}(M^{k-1})}&\leq\left\|\mathbb{E}\left((S(t-s)-I)u(s)\right)\right\|_{{\mathcal D}(M^{k-1})}+\int_{s}^{t}\left\|\mathbb{E}(S(t-r)F(r,u(r)))\right\|_{{\mathcal D}(M^{k-1})}{\rm d}r\\
	&	\leq\mathbb{E}\left\|\left((S(t-s)-I)u(s)\right)\right\|_{{\mathcal D}(M^{k-1})}+\mathbb{E}\int_{s}^{t}\left\|(S(t-r)F(r,u(r)))\right\|_{{\mathcal D}(M^{k-1})}{\rm d}r\\
	&\leq C(t-s).
	\end{split}
	\end{equation}
	Therefore we finish the proof.
\end{proof}

\section{Temporal semidiscretization}
In this section, we apply semi-implicit Euler scheme to discretize stochastic Maxwell equations \eqref{sM_equations} in temporal direction, and investigate the convergence order in mean-square sense of this scheme. For the time interval $[0,~T]$, we introduce a uniform partition with step size $\tau=\frac{T}{N}$:
\begin{equation}
0=t_0<t_1<\ldots<t_{N}=T.
\end{equation}

Applying the semi-implicit Euler scheme to equation \eqref{sM_equations} in temporal direction, we have
\begin{equation}\label{2.20}
\begin{split}
&u^{n+1}=u^{n}+\tau Mu^{n+1}+\tau F(t_{n+1},u^{n+1})+B(t_n,u^{n})\Delta W^{n+1},\\[1mm]
&u^{0}=u_0,
\end{split}
\end{equation}
where  the increment $\Delta W^{n+1}$ is given by
\begin{equation*}
\Delta W^{n+1}:=W(t_{n+1})-W(t_{n})=\sum_{j=1}^{\infty}(\beta_j(t_{n+1})-\beta_{j}(t_{n}))Q^{\frac12}e_j.
\end{equation*}
Recall $u^n=\begin{pmatrix}
{\bf E}^n\\{\bf H}^n
\end{pmatrix}$, then scheme \eqref{2.20} is equivalent to
\begin{equation}\label{Euler scheme}
\begin{split}
&\varepsilon{\bf E}^{n+1}=\varepsilon{\bf E}^n+\tau\nabla\times{\bf H}^{n+1}
-\tau{\bf J}_e(t_{n+1},{\bf E}^{n+1},{\bf H}^{n+1})-{\bf J}_e^r(t_n,{\bf E}^n,{\bf H}^n)\Delta W^{n+1},\\
&\mu{\bf H}^{n+1}=\mu{\bf H}^n-\tau\nabla\times{\bf E}^{n+1}-\tau{\bf J}_m(t_{n+1},{\bf E}^{n+1},{\bf H}^{n+1})-{\bf J}_m^r(t_n,{\bf E}^n,{\bf H}^n)\Delta W^{n+1},\\
&{\bf E}^0={\bf E}_0,~{\bf H}^0={\bf H}_0.
\end{split}
\end{equation}

\subsection{Properties of the discrete solution}
In this subsection, we will show that  there exists a ${\mathcal D}(M)$-valued $\{{\mathcal F}_{t_n}\}_{0\leq n\leq N}$-adapted discrete solution $\{ u^n; ~n=0,1,\ldots,N\}$ for scheme \eqref{2.20} or $\{ {\bf E}^n,{\bf H}^n;~n=0,1,\ldots,N\}$ for scheme \eqref{Euler scheme}.

\begin{lemma}
	For a fixed $T=t_{N}>0$, let
	$p\geq 2$ and   $\tau\leq \tau^{*}$ with $\tau^{*}:= \tau^{*}(\|u_0\|_{L^p(\Omega;{\mathcal D}(M))},T,p)$. There exists a ${\mathcal D}(M)$-valued $\{{\mathcal F}_{t_n}\}_{0\leq n\leq N}$-adapted discrete solution $\{ u^n; ~n=0,1,\ldots,N\}$ of the scheme \eqref{2.20}, and a constant $C:=C(p,T,\|Q^{\frac12}\|_{HS(U,H^{1+\gamma}(D))})>0$ such that
	\begin{equation}\label{bound un}
	\max_{1\leq n\leq N}{\mathbb E}\|u^n\|^{p}_{{\mathcal D}(M)}\leq C(1+\|u_0\|^{p}_{L^{p}(\Omega;{\mathcal D}(M))}).
	\end{equation}
\end{lemma}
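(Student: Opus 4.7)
The plan is to establish \eqref{bound un} in two stages: first proving existence of an $\mathcal{F}_{t_{n+1}}$-measurable $\mathcal{D}(M)$-valued solution to the implicit step, then deriving two coupled energy estimates that together control the graph norm. For existence I would rewrite \eqref{2.20} as the fixed-point equation
\begin{equation*}
u^{n+1} = (I-\tau M)^{-1}\bigl[u^n + \tau F(t_{n+1},u^{n+1}) + B(t_n,u^n)\Delta W^{n+1}\bigr],
\end{equation*}
and exploit that since $M$ is skew-adjoint, $(I-\tau M)^{-1}$ is a contraction on $\mathbb{H}$ with range $\mathcal{D}(M)$ (and more generally mapping $\mathcal{D}(M^k)$ into $\mathcal{D}(M^{k+1})$). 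Combined with the Lipschitz bound \eqref{Lip F} on $F$, the right-hand side becomes a strict contraction in $u^{n+1}$ for $\tau$ smaller than a universal constant depending only on the Lipschitz constant of $F$; the unique fixed point lies in $\mathcal{D}(M)$. Picard iteration initialized from the $\mathcal{F}_{t_n}$-measurable $u^n$ then yields $\mathcal{F}_{t_{n+1}}$-measurability, and induction provides the adapted sequence $\{u^n\}$.

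For the $\mathbb{H}$-energy estimate I would take the $\mathbb{H}$-inner product of \eqref{2.20} with $u^{n+1}$. Using $\langle Mu^{n+1},u^{n+1}\rangle_\mathbb{H}=0$ and the polarization identity $2\langle u^{n+1}-u^n,u^{n+1}\rangle_\mathbb{H}=\|u^{n+1}\|^2_\mathbb{H}-\|u^n\|^2_\mathbb{H}+\|u^{n+1}-u^n\|^2_\mathbb{H}$, this yields
\begin{equation*}
\|u^{n+1}\|^2_\mathbb{H} - \|u^n\|^2_\mathbb{H} + \|u^{n+1}-u^n\|^2_\mathbb{H} = 2\tau\langle F(t_{n+1},u^{n+1}),u^{n+1}\rangle_\mathbb{H} + 2\langle B(t_n,u^n)\Delta W^{n+1},u^{n+1}\rangle_\mathbb{H}.
\end{equation*}
I then split the stochastic cross term as $2\langle B\Delta W^{n+1},u^n\rangle+2\langle B\Delta W^{n+1},u^{n+1}-u^n\rangle$: the first has vanishing conditional expectation by $\mathcal{F}_{t_n}$-adaptedness of $B(t_n,u^n)$, and the second is bounded by Young's inequality as $\|u^{n+1}-u^n\|^2_\mathbb{H}+\|B\Delta W^{n+1}\|^2_\mathbb{H}$; the key point is that the polarization-identity term $\|u^{n+1}-u^n\|^2_\mathbb{H}$ exactly absorbs the Young remainder. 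Combined with $\mathbb{E}\|B\Delta W^{n+1}\|^2_\mathbb{H}\leq C\tau(1+\mathbb{E}\|u^n\|^2_\mathbb{H})$ from \eqref{bound B} and the linear growth \eqref{bound F} of $F$, this produces $\mathbb{E}\|u^{n+1}\|^2_\mathbb{H}\leq \mathbb{E}\|u^n\|^2_\mathbb{H}+C\tau(1+\mathbb{E}\|u^n\|^2_\mathbb{H}+\mathbb{E}\|u^{n+1}\|^2_\mathbb{H})$.

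The second estimate targets $\mathbb{E}\|Mu^{n+1}\|^2_\mathbb{H}$. Applying $M$ to \eqref{2.20} is legitimate because $u^{n+1}\in\mathcal{D}(M^2)$ (by the regularizing property of the resolvent applied to an element of $\mathcal{D}(M)$, using \eqref{linear growth of F Ml} and \eqref{linear growth of B Ml} with $\ell=1$ to place $F$ and $B\Delta W^{n+1}$ in $\mathcal{D}(M)$). This gives
\begin{equation*}
Mu^{n+1}-Mu^n = \tau M^2 u^{n+1}+\tau MF(t_{n+1},u^{n+1})+MB(t_n,u^n)\Delta W^{n+1}.
\end{equation*}
I take the $\mathbb{H}$-inner product with $Mu^{n+1}$: since $Mu^{n+1}\in\mathcal{D}(M)$, $\langle M^2 u^{n+1},Mu^{n+1}\rangle_\mathbb{H}=0$, and the polarization identity again supplies the indispensable positive term $\|Mu^{n+1}-Mu^n\|^2_\mathbb{H}$ that absorbs the Young remainder from $\langle MB\Delta W^{n+1},Mu^{n+1}-Mu^n\rangle_\mathbb{H}$. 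Applying \eqref{linear growth of F Ml} and \eqref{linear growth of B Ml} with $\ell=1$ and Sobolev embedding yields $\mathbb{E}\|Mu^{n+1}\|^2_\mathbb{H}\leq \mathbb{E}\|Mu^n\|^2_\mathbb{H}+C\tau(1+\mathbb{E}\|u^n\|^2_{\mathcal{D}(M)}+\mathbb{E}\|u^{n+1}\|^2_{\mathcal{D}(M)})$. Adding the two recursions gives $(1-C\tau)\mathbb{E}\|u^{n+1}\|^2_{\mathcal{D}(M)}\leq (1+C\tau)\mathbb{E}\|u^n\|^2_{\mathcal{D}(M)}+C\tau$; for $\tau\leq\tau^*<1/(2C)$ the discrete Gronwall inequality yields \eqref{bound un} with $p=2$. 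For general $p\geq 2$ I would raise each polarization identity to the power $p/2$, control the resulting higher-order variation terms $\|u^{n+1}-u^n\|^{2j}_\mathbb{H}\|u^{n+1}\|^{p-2j}_\mathbb{H}$ by Young's inequality, and close using the conditional moments $\mathbb{E}[\|B\Delta W^{n+1}\|^p_\mathbb{H}|\mathcal{F}_{t_n}]\leq C\tau^{p/2}(1+\|u^n\|^p_\mathbb{H})$ and the analogous bound with $MB$.

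The main obstacle lies in absorbing the stochastic cross term: because $u^{n+1}-u^n$ itself contains an $O(\sqrt\tau)$ contribution from $B\Delta W^{n+1}$ strongly correlated with the noise, a naive Young split produces an $O(1)$ remainder rather than the $O(\tau)$ one needs for Gronwall. The discrete-variation terms $\|u^{n+1}-u^n\|^2_\mathbb{H}$ and $\|Mu^{n+1}-Mu^n\|^2_\mathbb{H}$ that emerge from the polarization identity are precisely what cancel this correlation, which is exactly why the semi-implicit structure (implicit in $M$ and $F$, explicit in $B$) is chosen. For the $p$-th moment case the combinatorial bookkeeping of the variation terms of total order $p$ is delicate but the absorption mechanism is identical.
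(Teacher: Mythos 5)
Your proposal is correct and reaches the same estimate by the same underlying mechanism (polarization identity plus absorption of the Young remainder by the discrete variation terms, with the martingale cross terms vanishing in expectation), but it differs from the paper in two genuine ways. First, for existence and adaptedness you use the resolvent formulation $u^{n+1}=(I-\tau M)^{-1}[u^n+\tau F(t_{n+1},u^{n+1})+B(t_n,u^n)\Delta W^{n+1}]$ and Banach's fixed point theorem, exploiting $\|(I-\tau M)^{-1}\|_{\mathcal L(\mathbb H)}\le 1$ for skew-adjoint $M$; the paper instead uses a Galerkin approximation with Brouwer's theorem and then a measurable-selection theorem plus the Doob--Dynkin lemma. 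Your route is cleaner: it gives uniqueness of $u^{n+1}$ for free and makes measurability immediate via Picard iteration, at the modest price of a smallness condition on $\tau$ coming from the Lipschitz constant of $F$. Second, for the $\|Mu^{n+1}\|_{\mathbb H}$ estimate you apply $M$ to the scheme and pair with $Mu^{n+1}$, using $\langle M^2u^{n+1},Mu^{n+1}\rangle_{\mathbb H}=0$; this requires $u^{n+1}\in\mathcal D(M^2)$, which you correctly justify via the smoothing of $(I-\tau M)^{-1}$ together with \eqref{linear growth of F Ml} and \eqref{linear growth of B Ml} for $\ell=1$. The paper instead pairs the undifferentiated scheme \eqref{2.20} with $Mu^{n+1}-Mu^n$ as in \eqref{4.7} and moves $M$ onto $F$ and $B\Delta W^{n+1}$ by skew-adjointness, which lands in the same inequality \eqref{4.11} without ever invoking $\mathcal D(M^2)$-regularity of the iterates; both are valid under the standing assumptions. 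The only place where your argument is materially thinner than the paper's is the passage to $p>2$: ``raising the polarization identity to the power $p/2$'' does not telescope as written, and the clean way to close is the paper's inductive device of multiplying the $p=2$ inequality \eqref{4.11} by $\|u^{n+1}\|_{\mathcal D(M)}^{p-2}$ (or the elementary inequality $a^{p/2}-b^{p/2}\le \tfrac p2 a^{p/2-1}(a-b)$), combined with the conditional moment bounds on $B\Delta W^{n+1}$ that you already state; with that substitution your sketch closes completely.
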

\begin{proof}
	{\em Step 1: Existence and $\{{\mathcal F}_{t_n}\}_{0\leq n\leq N}$-adaptedness.} Fix a set $\Omega^{'}\subset\Omega$, ${\mathbb P}(\Omega^{'})=1$ such that $W(t,\omega)\in U$ for all $t\in[0,T]$ and $\omega\in\Omega^{'}$. In the following, let us assume that $\omega\in\Omega^{'}$. The existence of iterates $\{ u^n; ~n=0,1,\ldots,N\}$
	follows from a standard Galerkin method and Brouwer's theorem, in combining with assertion \eqref{bound un}.
	
	Define a map
	\begin{equation*}
	\begin{split}
	\Lambda:~&{\mathcal D}(M)\times U\to{\mathcal P}({\mathcal D}(M))\\
	&(u^n,\Delta W^{n+1})\to \Lambda(u^n,\Delta W^{n+1})
	\end{split}
	\end{equation*}
	where ${\mathcal P}({\mathcal D}(M))$ denotes the set of all subsets of ${\mathcal D}(M)$, and $\Lambda(u^n,\Delta W^{n+1})$ is the set of solutions $u^{n+1}$ of \eqref{2.20}. By the closedness of the graph of $\Lambda$ and a selector theorem \cite[Theorem 3.1]{BT1973}, there exists a universally and Borel measurable mapping $\lambda_n:~{\mathcal D}(M)\times U\to {\mathcal D}(M)$ such that $\lambda_n(s_1,s_2)\in\Lambda(s_1,s_2)$ for all $(s_1,s_2)\in{\mathcal D}(M)\times U$. Therefore, ${\mathcal F}_{t_{n+1}}$-measurability of $u^{n+1}$ follows from the Doob-Dynkin lemma.
	
	{\em Step 2: Case $p=2$ for \eqref{bound un}.}
	We  apply $\langle\cdot,~u^{n+1}\rangle_{{\mathbb H}}$ into both sides of \eqref{2.20} and get
	\begin{equation}\label{4.5}
	\begin{split}
	&	\frac12\Big(\|u^{n+1}\|^2_{\mathbb H}-\|u^n\|^2_{\mathbb H}\Big)+\frac12\|u^{n+1}-u^{n}\|^2_{\mathbb H}\\
	&	=\tau\langle F(t_{n+1},u^{n+1}),~u^{n+1}\rangle_{{\mathbb H}}
	+\langle B(t_n,u^n)\Delta W^{n+1},~u^{n+1}\rangle_{{\mathbb H}}\\
	&\leq C\tau(1+\|u^{n+1}\|_{\mathbb H})\|u^{n+1}\|_{\mathbb H}
	+\|B(t_n,u^n)\Delta W^{n+1}\|_{\mathbb H}^2\\
	&\quad+\frac14\|u^{n+1}-u^n\|_{\mathbb H}^2+\langle B(t_n,u^n)\Delta W^{n+1},~u^{n}\rangle_{{\mathbb H}},
	\end{split}
	\end{equation}
	which gives
	\begin{equation}\label{4.6}
	\begin{split}
	&	\frac12\Big(\|u^{n+1}\|^2_{\mathbb H}-\|u^n\|^2_{\mathbb H}\Big)+\frac14\|u^{n+1}-u^{n}\|^2_{\mathbb H}\\
	&	\leq C\tau +C\tau\|u^{n+1}\|_{\mathbb H}^2+\|B(t_n,u^n)\Delta W^{n+1}\|_{\mathbb H}^2
	+\langle B(t_n,u^n)\Delta W^{n+1},~u^{n}\rangle_{{\mathbb H}}.
	\end{split}
	\end{equation}
	Next we apply $\langle\cdot,~Mu^{n+1}-Mu^{n}\rangle_{{\mathbb H}}$ into both sides of \eqref{2.20} and get
	\begin{equation}\label{4.7}
	\begin{split}
	&	\frac{1}{2}\Big(\|Mu^{n+1}\|^2_{\mathbb H}-\|Mu^{n}\|^2_{\mathbb H}\Big)
	+\frac12\|Mu^{n+1}-Mu^{n}\|^2_{\mathbb H}\\
	&	=-\langle F(t_{n+1},u^{n+1}),~Mu^{n+1}-Mu^{n}\rangle_{{\mathbb H}}
	-\frac{1}{\tau}\langle B(t_n,u^n)\Delta W^{n+1},~Mu^{n+1}-Mu^{n}\rangle_{{\mathbb H}}\\
	&:=I+II.
	\end{split}
	\end{equation}
	For the term $I$, using the skew adjoint property of operator $M$ and \eqref{2.20}, we get
	\begin{equation}\label{4.8}
	\begin{split}
	I=&\langle MF(t_{n+1},u^{n+1}),~u^{n+1}-u^{n}\rangle_{{\mathbb H}}\\
	=&\langle MF(t_{n+1},u^{n+1}),~\tau Mu^{n+1}+\tau F(t_{n+1},u^{n+1})+B(t_n,u^n)\Delta W^{n+1}\rangle_{{\mathbb H}}\\
	\leq &\tau\|MF(t_{n+1},u^{n+1})\|_{\mathbb H}\|Mu^{n+1}\|_{\mathbb H}
	+\langle MF(t_{n+1},u^{n+1}),~B(t_n,u^n)\Delta W^{n+1}\rangle_{\mathbb H}\\
	\leq & C\tau+C\tau\|Mu^{n+1}\|^2_{\mathbb H}
	+\langle MF(t_{n+1},u^{n+1}),~B(t_n,u^n)\Delta W^{n+1}\rangle_{\mathbb H}.
	\end{split}
	\end{equation}
	Similarly, for the term $II$, we get
	\begin{equation}\label{4.9}
	\begin{split}
	II=&\frac{1}{\tau}\langle M(B(t_n,u^n)\Delta W^{n+1}),~u^{n+1}-u^{n}\rangle_{\mathbb H}\\
	=&\frac{1}{\tau}\langle M(B(t_n,u^n)\Delta W^{n+1}),~\tau Mu^{n+1}+\tau F(t_{n+1},u^{n+1})+B(t_n,u^n)\Delta W^{n+1}\rangle_{\mathbb H}\\
	=&\langle M(B(t_n,u^n)\Delta W^{n+1}),~ Mu^{n+1}-Mu^{n}\rangle_{\mathbb H}
	+	\langle M(B(t_n,u^n)\Delta W^{n+1}),~ Mu^{n}\rangle_{\mathbb H}\\
	&	+\langle M(B(t_n,u^n)\Delta W^{n+1}),~F(t_{n+1},u^{n+1})\rangle_{\mathbb H}\\
	&\leq \frac14\|Mu^{n+1}-Mu^{n}\|^2_{\mathbb H}+\|M(B(t_n,u^n)\Delta W^{n+1})\|^2_{\mathbb H}+\langle M(B(t_n,u^n)\Delta W^{n+1}),~ Mu^{n}\rangle_{\mathbb H}\\
	&-\langle B(t_n,u^n)\Delta W^{n+1},~MF(t_{n+1},u^{n+1})\rangle_{\mathbb H}.
	\end{split}
	\end{equation}
	Substituting \eqref{4.8} and \eqref{4.9} into \eqref{4.7}, we have
	\begin{equation}\label{4.10}
	\begin{split}
	&	\frac{1}{2}\Big(\|Mu^{n+1}\|^2_{\mathbb H}-\|Mu^{n}\|^2_{\mathbb H}\Big)
	+\frac14\|Mu^{n+1}-Mu^{n}\|^2_{\mathbb H}\\
	&\leq	C\tau+C\tau\|Mu^{n+1}\|^2_{\mathbb H}
	+\|M(B(t_n,u^n)\Delta W^{n+1})\|^2_{\mathbb H}
	+\langle M(B(t_n,u^n)\Delta W^{n+1}),~ Mu^{n}\rangle_{\mathbb H}.
	\end{split}
	\end{equation}
	Summing \eqref{4.6} and \eqref{4.10} together leads to
	\begin{equation}\label{4.11}
	\begin{split}
	&\frac12\Big(\|u^{n+1}\|^2_{{\mathcal D}(M)}-\|u^n\|^2_{{\mathcal D}(M)}\Big)
	+\frac14\|u^{n+1}-u^{n}\|_{{\mathcal D}(M)}^2\\
	&\leq C\tau+C\tau\|u^{n+1}\|^2_{{\mathcal D}(M)}+\|B(t_n,u^n)\Delta W^{n+1}\|^2_{{\mathcal D}(M)}+\langle B(t_n,u^n)\Delta W^{n+1},~u^{n}\rangle_{{\mathcal D}(M)}.
	\end{split}
	\end{equation}
	After applying expectation on both sides of \eqref{4.11}, one arrives at
	\begin{equation*}
	\begin{split}
	&	\frac12\Big({\mathbb E}\|u^{n+1}\|^2_{{\mathcal D}(M)}-{\mathbb E}\|u^n\|^2_{{\mathcal D}(M)}\Big)
	+\frac14{\mathbb E}\|u^{n+1}-u^{n}\|_{{\mathcal D}(M)}^2\\
	&	\leq C\tau+C\tau{\mathbb E}\|u^{n+1}\|^2_{{\mathcal D}(M)}
	+C\|Q^{\frac12}\|^2_{HS(U,H^{1+\gamma}(D))}\tau(1+{\mathbb E}\|u^n\|^2_{{\mathcal D}(M)}).
	\end{split}
	\end{equation*}
	The discrete Gronwall's lemma then leads to the assertion of this lemma in case $\tau\leq \tau^{*}$ is chosen.
	
	{\em Step 3: Case $p>2$ for \eqref{bound un}.}
	In order to show assertion \eqref{bound un}, we employ an inductive argument.
	To obtain the result for $p=4$, we multiply \eqref{4.11} by $\|u^{n+1}\|^2_{{\mathcal D}(M)}$, and use the identity $(a-b)a=\frac12\big(a^2-b^2+(a-b)^2\big)$ where $a,b\in{\mathbb R}$, to get
	\begin{equation}
	\begin{split}
	&	\frac14\Big(\|u^{n+1}\|^4_{{\mathcal D}(M)}-\|u^{n}\|^4_{{\mathcal D}(M)}\Big)+\frac14(\|u^{n+1}\|^2_{{\mathcal D}(M)}-\|u^{n}\|^2_{{\mathcal D}(M)})^2+\frac14\|u^{n+1}-u^{n}\|_{{\mathcal D}(M)}^2\|u^{n+1}\|^2_{{\mathcal D}(M)}\\
	&	\leq C\tau\|u^{n+1}\|^2_{{\mathcal D}(M)}+C\tau\|u^{n+1}\|^4_{{\mathcal D}(M)}+\|B(t_n,u^n)\Delta W^{n+1}\|^2_{{\mathcal D}(M)}\|u^{n+1}\|^2_{{\mathcal D}(M)}\\
	&\quad+\langle B(t_n,u^n)\Delta W^{n+1},~u^{n}\rangle_{{\mathcal D}(M)}\|u^{n+1}\|^2_{{\mathcal D}(M)}\\
	&\leq C\tau\|u^{n+1}\|^2_{{\mathcal D}(M)}+C\tau\|u^{n+1}\|^4_{{\mathcal D}(M)}+\frac{1}{\tau}\|B(t_n,u^n)\Delta W^{n+1}\|^4_{{\mathcal D}(M)}
	+\frac18(\|u^{n+1}\|^2_{{\mathcal D}(M)}-\|u^{n}\|^2_{{\mathcal D}(M)})^2\\
	&\quad+(\langle B(t_n,u^n)\Delta W^{n+1},~u^{n}\rangle_{{\mathcal D}(M)})^2+
	\langle B(t_n,u^n)\Delta W^{n+1},~u^{n}\rangle_{{\mathcal D}(M)}\|u^{n}\|^2_{{\mathcal D}(M)}.
	\end{split}
	\end{equation}
	After applying expectation on both sides of the above inequality and using the linear growth property of $B$, one gets
	\begin{equation}
	\begin{split}
	&	\frac14\Big({\mathbb E}\|u^{n+1}\|^4_{{\mathcal D}(M)}-{\mathbb E}\|u^{n}\|^4_{{\mathcal D}(M)}\Big)+\frac18{\mathbb E}(\|u^{n+1}\|^2_{{\mathcal D}(M)}-\|u^{n}\|^2_{{\mathcal D}(M)})^2\\
	&\leq C\tau{\mathbb E}	\|u^{n+1}\|^2_{{\mathcal D}(M)}+C\tau{\mathbb E}\|u^{n+1}\|^4_{{\mathcal D}(M)}+C\|Q^{\frac12}\|_{HS(U,H^{1+\gamma}(D))}^4\tau(1+{\mathbb E}\|u^{n}\|^4_{{\mathcal D}(M)})\\
	&\quad+C\|Q^{\frac12}\|_{HS(U,H^{1+\gamma}(D))}^2{\mathbb E}\big(\|u^{n}\|^2_{{\mathcal D}(M)}(1+\|u^{n}\|^2_{{\mathcal D}(M)})\big)
	\end{split}
	\end{equation}
	The discrete Gronwall's lemma then leads to the assertion for $p=4$ in case $\tau\leq \tau^{*}$ is chosen.
	
	Using the case when $p=2$ and $p=4$, it is easy to check that the following holds true
	\[
	{\mathbb E}\|u^{n}\|^3_{{\mathcal D}(M)}\leq \frac12\Big({\mathbb E}\|u^{n}\|^2_{{\mathcal D}(M)}+{\mathbb E}\|u^{n}\|^4_{{\mathcal D}(M)}\Big)
	\leq C,
	\]
	which leads to the assertion for $p=3$.
	
	By repeating the above procedure, we could show that the assertion holds for general $p\geq 2$. Thus we complete the proof.
\end{proof}

\subsection{Mean-square convergence order}
In this subsection we investigate the convergence order in mean-square sense of semidiscretization \eqref{2.20} via the truncation error approach.

Denote by $\delta^{n+1}$ the truncation error of the semi-implicit Euler scheme, i.e., 	
\begin{equation}\label{delta_n+1}
\delta^{n+1}:=u(t_{n+1})-u(t_n)-\tau Mu(t_{n+1})-\tau F(t_{n+1},u(t_{n+1}))-B(t_n,u(t_n))\Delta W^{n+1},
\end{equation}	
where $u(t)$ means the solution of stochastic Maxwell equations \eqref{sM_equations}. The estimate of this truncation error is stated in the following lemma.

\begin{lemma}\label{estimate_delta_n+1}
	Let Assumptions \ref{assump0}-\ref{assump 2} be fulfilled with $k=2$, and suppose that $u_0$ is an ${\mathcal F}_0$-measurable random variable satisfying $\|u_0\|_{L^2(\Omega;{\mathcal D}(M^2))}<\infty$. Then we have
	\begin{equation}
	\begin{split}
	\mathbb{E}\|\delta^{n+1}\|_{\mathbb H}^2\leq C\tau^2,\quad{\mathbb E}\|\mathbb{E}\big(\delta^{n+1}|{\mathcal F}_{t_{n}}\big)\|^2_{\mathbb H}\leq C\tau^{3},
	\end{split}
	\end{equation}
	where the positive constant $C$ depends on the Lipschitz coefficients of $F$ and $B$, $T$, $\|u_0\|_{L^2(\Omega;{\mathcal D}(M^2))}$ and $\|Q^{\frac12}\|_{HS(U,H^{2+\gamma}(D))}$.
\end{lemma}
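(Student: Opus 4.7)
\medskip

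\noindent The plan is to substitute the strong form of $u(t_{n+1})-u(t_n)$ into the definition \eqref{delta_n+1} of $\delta^{n+1}$ so that only differences of integrands remain, and then to bound each piece using the regularity in $\mathcal{D}(M^2)$ from Proposition \ref{regularity} and the H\"older continuity in $\mathcal{D}(M)$ from Proposition \ref{holder} (both with $k=2$). Under the hypothesis $\|u_0\|_{L^2(\Omega;\mathcal{D}(M^2))}<\infty$, the mild solution is a strong solution, so
\begin{equation*}
u(t_{n+1})-u(t_n)=\int_{t_n}^{t_{n+1}}Mu(s)\,ds+\int_{t_n}^{t_{n+1}}F(s,u(s))\,ds+\int_{t_n}^{t_{n+1}}B(s,u(s))\,dW(s).
\end{equation*}
Inserting this into \eqref{delta_n+1} and rearranging yields the decomposition $\delta^{n+1}=A_1+A_2+A_3$, where
\begin{align*}
A_1 &:= \int_{t_n}^{t_{n+1}} M\bigl(u(s)-u(t_{n+1})\bigr)\,ds,\\
A_2 &:= \int_{t_n}^{t_{n+1}} \bigl[F(s,u(s))-F(t_{n+1},u(t_{n+1}))\bigr]\,ds,\\
A_3 &:= \int_{t_n}^{t_{n+1}} \bigl[B(s,u(s))-B(t_n,u(t_n))\bigr]\,dW(s).
\end{align*}

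\noindent For the mean-square bound, I would treat the three terms separately. Cauchy--Schwarz in time combined with $\mathbb{E}\|u(s)-u(t_{n+1})\|_{\mathcal{D}(M)}^{2}\le C|s-t_{n+1}|$ gives $\mathbb{E}\|A_1\|_{\mathbb{H}}^2\le C\tau^{3}$. For $A_2$, Cauchy--Schwarz together with the Lipschitz property \eqref{Lip F} and the H\"older continuity of $u$ in $\mathbb{H}$ (in fact in $\mathcal{D}(M)$) yields $\mathbb{E}\|A_2\|_{\mathbb{H}}^2\le C\tau^{3}$. For $A_3$, It\^o's isometry gives
\begin{equation*}
\mathbb{E}\|A_3\|_{\mathbb{H}}^2=\int_{t_n}^{t_{n+1}}\mathbb{E}\|B(s,u(s))-B(t_n,u(t_n))\|_{HS(U_0,\mathbb{H})}^{2}\,ds\le C\tau^{2},
\end{equation*}
via \eqref{Lip B} and H\"older continuity. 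The $A_3$ piece is the dominant one and produces the $\tau^{2}$ order.

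\noindent For the conditional-expectation estimate, the key simplification is that $\mathbb{E}(A_3\mid\mathcal{F}_{t_n})=0$, because $s\mapsto B(s,u(s))$ is adapted and the It\^o integral starting at $t_n$ is a martingale increment. For $A_2$, Jensen's inequality and the previous step give $\mathbb{E}\|\mathbb{E}(A_2\mid\mathcal{F}_{t_n})\|_{\mathbb{H}}^2\le\mathbb{E}\|A_2\|_{\mathbb{H}}^2\le C\tau^{3}$. The technically delicate term is $A_1$: I would re-expand
\begin{equation*}
u(s)-u(t_{n+1})=-\int_{s}^{t_{n+1}}\bigl[Mu(r)+F(r,u(r))\bigr]\,dr-\int_{s}^{t_{n+1}}B(r,u(r))\,dW(r),
\end{equation*}
apply the closed operator $M$ (which is licit because $u(r)\in\mathcal{D}(M^{2})$ a.s.\ by Proposition \ref{regularity}), and again use that the stochastic integral piece vanishes under $\mathbb{E}(\cdot\mid\mathcal{F}_{t_n})$. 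The uniform bound on $\mathbb{E}\|u(r)\|_{\mathcal{D}(M^{2})}^{2}$ together with \eqref{linear growth of F Ml} for $\ell=1$ then yields $\mathbb{E}\|\mathbb{E}(M(u(s)-u(t_{n+1}))\mid\mathcal{F}_{t_n})\|_{\mathbb{H}}^{2}\le C(t_{n+1}-s)^{2}$, so that $\mathbb{E}\|\mathbb{E}(A_1\mid\mathcal{F}_{t_n})\|_{\mathbb{H}}^{2}\le C\tau^{4}$, comfortably below the target $\tau^{3}$. Summing the three contributions completes the proof.

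\noindent The main obstacle is the $A_1$ term in the conditional-expectation bound: a naive Jensen step would cost a factor of $\tau$ that we cannot afford once the $\mathcal{D}(M)$-norm is already present, and one must instead re-open $u(s)-u(t_{n+1})$ as a strong-solution integral, commute $M$ through, and exploit the martingale property to drop the diffusion term. This is precisely where the higher regularity hypothesis $u_0\in L^{2}(\Omega;\mathcal{D}(M^{2}))$ and $Q^{1/2}\in HS(U,H^{2+\gamma}(D))$ is essential.
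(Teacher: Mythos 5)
Your decomposition $\delta^{n+1}=A_1+A_2+A_3$, the Cauchy--Schwarz/It\^o-isometry estimates for the mean-square bound, and the observation that $\mathbb{E}(A_3\mid\mathcal{F}_{t_n})=0$ all coincide with the paper's proof, and your argument is correct. The one place you diverge is the term $A_1$ in the conditional-expectation estimate, and there your diagnosis of the ``main obstacle'' is mistaken: a plain Jensen step does \emph{not} cost an extra factor of $\tau$. Since Cauchy--Schwarz in time plus the H\"older continuity $\mathbb{E}\|u(s)-u(t_{n+1})\|_{\mathcal{D}(M)}^2\le C|s-t_{n+1}|$ already give $\mathbb{E}\|A_1\|_{\mathbb H}^2\le C\tau^3$, Jensen yields $\mathbb{E}\|\mathbb{E}(A_1\mid\mathcal{F}_{t_n})\|_{\mathbb H}^2\le\mathbb{E}\|A_1\|_{\mathbb H}^2\le C\tau^3$, which is exactly the target; this is precisely what the paper does (it bounds the conditional-expectation term by $I+II\le C\tau^3$ and never re-opens $A_1$). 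Your finer route --- re-expanding $u(s)-u(t_{n+1})$ as a strong-solution integral, commuting $M$ through using $u(r)\in\mathcal{D}(M^2)$, and killing the diffusion part under $\mathbb{E}(\cdot\mid\mathcal{F}_{t_n})$ --- is valid and gives the sharper $C\tau^4$ for that piece, but it buys nothing here because $A_2$ already contributes $C\tau^3$ and the overall rate is unchanged. (Such a re-expansion would become genuinely necessary only if one aimed at a local mean error of order higher than $\tau^{3/2}$.)
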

\begin{proof}
	By replacing the expression
	\[u(t_{n+1})-u(t_n)
	=\int_{t_n}^{t_{n+1}}Mu(s){\rm d}s+\int_{t_n}^{t_{n+1}}F(s,u(s)){\rm d}s+\int_{t_n}^{t_{n+1}}B(s,u(s)){\rm d}W(s)\]
	into \eqref{delta_n+1}, we have
	\begin{equation}
	\begin{split}
	\delta^{n+1}=&\int_{t_n}^{t_{n+1}}\big(Mu(s)-Mu(t_{n+1}\big){\rm d}s+\int_{t_n}^{t_{n+1}}\big(F(s,u(s))-F(t_{n+1},u(t_{n+1}))\big){\rm d}s\\[1.5mm]
	&+\int_{t_n}^{t_{n+1}}\big(B(s,u(s))-B(t_n,u(t_n))\big){\rm d}W(s).
	\end{split}
	\end{equation}
	Then,
	\begin{equation*}
	\begin{split}
	\mathbb{E}\|\delta^{n+1}\|_{\mathbb H}^2\lesssim&\mathbb{E}\left\|\int_{t_n}^{t_{n+1}}M(u(s)-u(t_{n+1})){\rm d}s\right\|_{\mathbb H}^2+\mathbb{E}\left\|\int_{t_n}^{t_{n+1}}(F(s,u(s))-F(t_{n+1},u(t_{n+1}))){\rm d}s\right\|_{\mathbb H}^2\\[1.5mm]
	&+\mathbb{E}\left\|\int_{t_n}^{t_{n+1}}(B(s,u(s))-B(t_n,u(t_n))){\rm d}W(s)\right\|_{\mathbb H}^2\\[1.5mm]
	&=:I+II+III.
	\end{split}
	\end{equation*}
	Using H\"{o}lder inequality to the first term $I$ leads to
	\begin{equation}
	\begin{split}
	I\leq& \tau\mathbb{E}\int_{t_n}^{t_{n+1}}\left\|M(u(s)-u(t_{n+1}))\right\|_{\mathbb H}^2{\rm d}s\\[1mm]
	&\leq \tau\int_{t_n}^{t_{n+1}}\mathbb{E}\left\|u(s)-u(t_{n+1})\right\|_{{\mathcal D}(M)}^2{\rm d}s.
	\end{split}
	\end{equation}
	Based on Proposition \ref{holder}, it holds
	\begin{equation}\label{tm_I}
	\begin{split}
	I\leq C\tau^3,
	\end{split}
	\end{equation}
	where $C$ depends on the coefficients $F$ and $B$, $T$, $\|Q^{\frac12}\|_{HS(U,H^{2+\gamma}(D))}$ and $\|u_0\|_{L^2(\Omega;{\mathcal D}(M^2))}$.
	
	For the second term $II$, similarly, by Proposition \ref{holder} and continuous differentiability of $F$ with respect to $t$, we have
	\begin{equation}
	\begin{split}
	II\leq&\tau\mathbb{E}\int_{t_n}^{t_{n+1}}\left\|F(s,u(s))-F(t_{n+1},u(t_{n+1}))\right\|_{\mathbb H}^2{\rm d}s\\[1.5mm]
	\leq& \tau\mathbb{E}\int_{t_n}^{t_{n+1}}\left\|F(s,u(s))-F(s,u(t_{n+1}))\right\|_{\mathbb H}^2{\rm d}s\\[1.5mm]
	&+\tau\mathbb{E}\int_{t_n}^{t_{n+1}}\left\|F(t_{n+1},u(t_{n+1}))-F(s,u(t_{n+1}))\right\|_{\mathbb H}^2{\rm d}s\\[1.5mm]
	\leq& C\tau\int_{t_n}^{t_{n+1}}{\mathbb E}\|u(s)-u(t_{n+1})\|_{\mathbb H}^2+\|\partial_{t}F(\theta,u(t_{n+1}))(t_{n+1}-s)\|^2_{{\mathbb H}}{\rm d}s\\
	\leq& C\tau^3,
	\end{split}
	\end{equation}
		where $C$ depends on the coefficients $F$ and $B$, $T$, $\|Q^{\frac12}\|_{HS(U,H^{1+\gamma}(D))}$ and $\|u_0\|_{L^2(\Omega;{\mathcal D}(M))}$.
		
	By the infinite dimensional It\^{o} isometry formula, for the third term $III$ we get,
\begin{align*}
	III=&\mathbb{E}\int_{t_n}^{t_{n+1}}\left\|(B(s,u(s))-B(t_n,u(t_n)))\right\|_{HS(U_0,{\mathbb H})}^2{\rm d}s\\[1.5mm]
\leq& {\mathbb E}    \int_{t_n}^{t_{n+1}}\left\|(B(s,u(s))-B(s,u(t_n)))\right\|_{HS(U_0,{\mathbb H})}^2{\rm d}s\\[1.5mm]
&+{\mathbb E}   \int_{t_n}^{t_{n+1}}\left\|(B(s,u(t_n))-B(t_n,u(t_n)))\right\|_{HS(U_0,{\mathbb H})}^2{\rm d}s\\[1.5mm]
\leq &C\|Q^{\frac12}\|^2_{HS(U,H^{\gamma}(D))}{\mathbb E} \int_{t_n}^{t_{n+1}}\|u(s)-u(t_n)\|_{\mathbb H}^2{\rm d}s \\[1.5mm]
& +  {\mathbb E}    \int_{t_n}^{t_{n+1}}\left\|\partial_{t}B(\theta_1,u(t_n))(s-t_n)\right\|_{HS(U_0,{\mathbb H})}^2{\rm d}s\\[1.5mm]
\leq& C\tau^2,
\end{align*}
	where $C$ depends on the coefficients $F$ and $B$, $T$, $\|Q^{\frac12}\|_{HS(U,H^{1+\gamma}(D))}$ and $\|u_0\|_{L^2(\Omega;{\mathcal D}(M))}$.

	Combining the above equations, we can obtain the first assertion.
	
	In the similar way, we can prove that
	\begin{equation*}
	\begin{split}
	&{\mathbb E}\|\mathbb{E}\big(\delta^{n+1}|{\mathcal F}_{t_{n}}\big)\|^2_{\mathbb H}\\
	&\lesssim\left\|\mathbb{E}\left(\int_{t_n}^{t_{n+1}}M(u(s)-u(t_{n+1})){\rm d}s\right)\right\|^2_{\mathbb H}+\left\|\mathbb{E}\left(\int_{t_n}^{t_{n+1}}(F(s,u(s))-F(t_{n+1},u(t_{n+1}))){\rm d}s\right)\right\|^2_{\mathbb H}\\[1.5mm]
	&\leq\mathbb{E}\left\|\left(\int_{t_n}^{t_{n+1}}M(u(s)-u(t_{n+1})){\rm d}s\right)\right\|^2_{\mathbb H}+\mathbb{E}\left\|\left(\int_{t_n}^{t_{n+1}}(F(s,u(s))-F(t_{n+1},u(t_{n+1}))){\rm d}s\right)\right\|^2_{\mathbb H}\\[1.5mm]
	&\leq I+II\leq C\tau^{3},
	\end{split}
	\end{equation*}
		where $C$ depends on the coefficients $F$ and $B$, $T$, $\|Q^{\frac12}\|_{HS(U,H^{2+\gamma}(D))}$ and $\|u_0\|_{L^2(\Omega;{\mathcal D}(M^2))}$.
	Thus, we have finished the proof.
\end{proof}						

Denote by $e^n=u(t_n)-u^n$, then the main result of this paper is stated in the following theorem.
\begin{theorem}\label{thm:order}
	Under the same assumption of Lemma \ref{estimate_delta_n+1}, we have
	\begin{equation}
	\max_{0\leq n\leq N} \left(\mathbb{E}\|e^n\|_{\mathbb H}^2\right)^{1/2}\leq C\tau^{\frac12},
	\end{equation}
	where the positive constant $C$ may depend on the Lipschitz coefficients of $F$ and $B$, $T$, $\|u_0\|_{L^2(\Omega;{\mathcal D}(M^2))}$ and $\|Q^{\frac12}\|_{HS(U,H^{2+\gamma}(D))}$, but independent of $\tau$ and $n$.
\end{theorem}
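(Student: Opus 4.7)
The plan is to run the classical energy method on the error equation, producing the recursion highlighted in the introduction that links the global error to the local truncation error in \emph{both} mean and mean-square senses, and then to plug in the truncation estimates from Lemma~\ref{estimate_delta_n+1} and close with discrete Gronwall.

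First, I would subtract scheme \eqref{2.20} from the identity defining $\delta^{n+1}$ in \eqref{delta_n+1} to obtain the error recursion
\begin{equation*}
e^{n+1}=e^n+\tau M e^{n+1}+\tau\bigl[F(t_{n+1},u(t_{n+1}))-F(t_{n+1},u^{n+1})\bigr]+\bigl[B(t_n,u(t_n))-B(t_n,u^n)\bigr]\Delta W^{n+1}+\delta^{n+1}.
\end{equation*}
Taking the $\mathbb{H}$-inner product with $e^{n+1}$ and using the skew-adjointness $\langle Me^{n+1},e^{n+1}\rangle_{\mathbb H}=0$ together with the polarization identity $\langle e^n,e^{n+1}\rangle_{\mathbb H}=\tfrac12(\|e^n\|_{\mathbb H}^2+\|e^{n+1}\|_{\mathbb H}^2-\|e^{n+1}-e^n\|_{\mathbb H}^2)$ yields the energy identity
\begin{equation*}
\tfrac12\|e^{n+1}\|_{\mathbb H}^2-\tfrac12\|e^n\|_{\mathbb H}^2+\tfrac12\|e^{n+1}-e^n\|_{\mathbb H}^2=\tau\langle\Delta F,e^{n+1}\rangle_{\mathbb H}+\langle\Delta B\,\Delta W^{n+1},e^{n+1}\rangle_{\mathbb H}+\langle\delta^{n+1},e^{n+1}\rangle_{\mathbb H},
\end{equation*}
where $\Delta F$ and $\Delta B$ are the obvious Lipschitz differences bounded by $e^{n+1}$ and $e^n$ via \eqref{Lip F} and \eqref{Lip B}. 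The $F$-term is immediately dominated by $C\tau\,\mathbb E\|e^{n+1}\|_{\mathbb H}^2$.

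The central step is to decompose $e^{n+1}=e^n+(e^{n+1}-e^n)$ inside both the stochastic term and the truncation term. Because $\Delta B$ and $e^n$ are $\mathcal F_{t_n}$-measurable while $\Delta W^{n+1}$ is a centered martingale increment, $\mathbb E\langle\Delta B\,\Delta W^{n+1},e^n\rangle_{\mathbb H}=0$; the increment part is controlled by Young's inequality and the It\^o isometry as $\tfrac14\mathbb E\|e^{n+1}-e^n\|_{\mathbb H}^2+C\tau\,\mathbb E\|e^n\|_{\mathbb H}^2$. For the truncation term I write $\mathbb E\langle\delta^{n+1},e^n\rangle_{\mathbb H}=\mathbb E\langle\mathbb E[\delta^{n+1}|\mathcal F_{t_n}],e^n\rangle_{\mathbb H}$ and bound it by $\tfrac{C}{2\tau}\mathbb E\|\mathbb E[\delta^{n+1}|\mathcal F_{t_n}]\|_{\mathbb H}^2+\tfrac{\tau}{2}\mathbb E\|e^n\|_{\mathbb H}^2$, while $\mathbb E\langle\delta^{n+1},e^{n+1}-e^n\rangle_{\mathbb H}\le\tfrac14\mathbb E\|e^{n+1}-e^n\|_{\mathbb H}^2+C\mathbb E\|\delta^{n+1}\|_{\mathbb H}^2$. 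The two $\tfrac14\|e^{n+1}-e^n\|_{\mathbb H}^2$ terms are absorbed into the $\tfrac12\|e^{n+1}-e^n\|_{\mathbb H}^2$ on the LHS, producing exactly the recursion
\begin{equation*}
\mathbb E\|e^{n+1}\|_{\mathbb H}^2\leq \mathbb E\|e^n\|_{\mathbb H}^2+C\tau\bigl(\mathbb E\|e^n\|_{\mathbb H}^2+\mathbb E\|e^{n+1}\|_{\mathbb H}^2\bigr)+C\mathbb E\|\delta^{n+1}\|_{\mathbb H}^2+\tfrac{C}{\tau}\mathbb E\|\mathbb E[\delta^{n+1}|\mathcal F_{t_n}]\|_{\mathbb H}^2.
\end{equation*}
Substituting $\mathbb E\|\delta^{n+1}\|_{\mathbb H}^2\le C\tau^2$ and $\mathbb E\|\mathbb E[\delta^{n+1}|\mathcal F_{t_n}]\|_{\mathbb H}^2\le C\tau^3$ from Lemma~\ref{estimate_delta_n+1}, both inhomogeneous contributions are of size $C\tau^2$. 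For $\tau$ small enough to absorb $C\tau\,\mathbb E\|e^{n+1}\|_{\mathbb H}^2$ on the left, discrete Gronwall over $n=0,\ldots,N-1$ with $e^0=0$ delivers $\max_n\mathbb E\|e^n\|_{\mathbb H}^2\le C\tau$, which is the desired bound.

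The main obstacle is the handling of the driftless martingale and truncation terms: a direct Cauchy-Schwarz on $\langle\delta^{n+1},e^{n+1}\rangle_{\mathbb H}$ combined with $\mathbb E\|\delta^{n+1}\|_{\mathbb H}^2=O(\tau^2)$ would give only $O(\tau^{1/4})$. The splitting trick $e^{n+1}=e^n+(e^{n+1}-e^n)$ simultaneously exploits the $\mathcal F_{t_n}$-measurability of $e^n$ (to unlock the sharper conditional estimate $O(\tau^3)$ for $\mathbb E[\delta^{n+1}|\mathcal F_{t_n}]$) and the presence of $\|e^{n+1}-e^n\|_{\mathbb H}^2$ on the LHS of the energy identity (to absorb the Young residues from the increment part). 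It is this dual use of the mean and mean-square truncation bounds, together with the favorable skew-adjoint structure of $M$ that makes $\langle Me^{n+1},e^{n+1}\rangle_{\mathbb H}$ vanish exactly, that upgrades the convergence order from $\tau^{1/4}$ to $\tau^{1/2}$.
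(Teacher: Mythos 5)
Your proposal is correct and follows essentially the same route as the paper's proof: the identical error recursion, the energy identity with the $\tfrac12\|e^{n+1}-e^n\|_{\mathbb H}^2$ term on the left, the splitting $e^{n+1}=e^n+(e^{n+1}-e^n)$ in both the truncation and stochastic terms, the conditional-expectation/martingale arguments, and discrete Gronwall. No substantive differences to report.
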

\begin{proof}
	Subtracting equation \eqref{2.20} from \eqref{delta_n+1}, it leads to
	\begin{equation}
	\begin{split}
	e^{n+1}-e^{n}=&\delta^{n+1}+\tau Me^{n+1}+\tau\Big(F(t_{n+1},u(t_{n+1}))-F(t_{n+1},u^{n+1})\Big)\\
	&+\Big(B(t_n,u(t_n))-B(t_{n},u^n)\Big)\Delta W^{n+1}.
	\end{split}
	\end{equation}
	Taking the ${\mathbb H}$-inner product of the above equality with $e^{n+1}$, we get
	\begin{equation}\label{4.23}
	\begin{split}
	\langle e^{n+1}-e^{n},e^{n+1}\rangle_{\mathbb H}=&\langle\delta^{n+1},e^{n+1}\rangle_{\mathbb H}+\tau \langle Me^{n+1},e^{n+1}\rangle_{\mathbb H}\\
	&+ \tau\langle F(t_{n+1},u(t_{n+1}))-F(t_{n+1},u^{n+1}),e^{n+1}\rangle_{\mathbb H}\\
	&+\left\langle \Big(B(t_n,u(t_n))-B(t_{n},u^n)\Big)\Delta W^{n+1}, e^{n+1}\right\rangle_{{\mathbb H}}.
	\end{split}
	\end{equation}

	Noticing that $e^{n+1}=\frac{1}{2}(e^{n+1}-e^n)+\frac{1}{2}(e^{n+1}+e^n)$, for the left-hand side of the above equation, we have
	\begin{equation}\label{4.24}
	\begin{split}
	\langle e^{n+1}-e^{n},e^{n+1}\rangle_{{\mathbb H}}=\frac{1}{2}\|e^{n+1}\|_{\mathbb H}^2-\frac{1}{2}\|e^n\|_{\mathbb H}^2+\frac{1}{2}\|e^{n+1}-e^n\|_{\mathbb H}^2.
	\end{split}
	\end{equation}
	
	For the first term in the right-hand side of \eqref{4.23},
	it follows from $ab\leq a^2+\frac{1}{4}b^2$ that
	\begin{equation}
	\begin{split}
	\langle\delta^{n+1},e^{n+1}\rangle_{\mathbb H}=&\langle\delta^{n+1},e^{n+1}-e^n\rangle_{\mathbb H}+\langle\delta^{n+1},e^{n}\rangle_{\mathbb H}\\[1mm]
	&\leq \|\delta^{n+1}\|_ {\mathbb H}\cdot\|e^{n+1}-e^n\|_{\mathbb H}+\langle\delta^{n+1},e^{n}\rangle_ {\mathbb H}\\[1mm]
	&\lesssim \|\delta^{n+1}\|_{\mathbb H}^2+\frac{1}{8}\|e^{n+1}-e^n\|_ {\mathbb H}^2+\langle\delta^{n+1},e^{n}\rangle_{\mathbb H}.
	\end{split}
	\end{equation}
	After applying expectation on both sides of the above inequality, we have
	\begin{equation}\label{4.26}
	\begin{split}
	{\mathbb E}\langle\delta^{n+1},e^{n+1}\rangle_{\mathbb H}
	&\leq {\mathbb E}\|\delta^{n+1}\|_{\mathbb H}^2+\frac{1}{8}{\mathbb E}\|e^{n+1}-e^n\|_ {\mathbb H}^2+{\mathbb E}\langle{\mathbb E}\big(\delta^{n+1}|{\mathcal F}_{t_n}\big),e^{n}\rangle_{\mathbb H}\\
	&\leq{\mathbb E}\|\delta^{n+1}\|_{\mathbb H}^2+\frac{1}{8}{\mathbb E}\|e^{n+1}-e^n\|_ {\mathbb H}^2+{\mathbb E}\Big(\|{\mathbb E}\big(\delta^{n+1}|{\mathcal F}_{t_n}\big)\|_{\mathbb H}\|e^{n}\|_{\mathbb H}\Big)\\
	&\leq {\mathbb E}\|\delta^{n+1}\|_{\mathbb H}^2+\frac{1}{8}{\mathbb E}\|e^{n+1}-e^n\|_ {\mathbb H}^2+\frac{1}{\tau}{\mathbb E}\|{\mathbb E}\big(\delta^{n+1}|{\mathcal F}_{t_n}\big)\|^2_{\mathbb H}+\tau{\mathbb E}\|e^{n}\|^2_{\mathbb H}\\
	&\leq C\tau^2 +\tau{\mathbb E}\|e^{n}\|^2_{\mathbb H}+\frac{1}{8}{\mathbb E}\|e^{n+1}-e^n\|_ {\mathbb H}^2,
	\end{split}
	\end{equation}
	where in the last step, we utilize the results on the estimates for truncation error $\delta^{n+1}$ in Lemma \ref{estimate_delta_n+1}.
	
	For the second term in the right-hand side of \eqref{4.23},
	utilizing the skew-adjointness of the Maxwell operator $M$, it holds
	\begin{equation}\label{4.27}
	\langle Me^{n+1},e^{n+1}\rangle_{\mathbb H}=0.
	\end{equation}
	
	For the third and forth terms in the right-hand side of \eqref{4.23}, utilizing the global
	Lipschitz properties of $F$ and $B$, respectively, we obtain
	\begin{equation}\label{4.28}
	\begin{split}
	&\tau\langle F(t_{n+1},u(t_{n+1}))-F(t_{n+1},u^{n+1}),e^{n+1}\rangle_{\mathbb H}\\
	&\leq \tau\|F(t_{n+1},u(t_{n+1}))-F(t_{n+1},u^{n+1})\|_{\mathbb H}\|e^{n+1}\|_{\mathbb H}\\
	&\leq C\tau\|e^{n+1}\|_{\mathbb H}^2,
	\end{split}
	\end{equation}
	and
	\begin{equation}\label{4.29}
	\begin{split}
	&\left\langle \Big(B(t_n,u(t_n))-B(t_{n},u^n)\Big)\Delta W^{n+1}, e^{n+1}\right\rangle_{{\mathbb H}}\\[1.5mm]
	&=\left\langle \Big(B(t_n,u(t_n))-B(t_{n},u^n)\Big)\Delta W^{n+1}, e^{n+1}-e^{n}\right\rangle_{{\mathbb H}}\\[1.5mm]
	&\quad+\left\langle \Big(B(t_n,u(t_n))-B(t_{n},u^n)\Big)\Delta W^{n+1}, e^{n}\right\rangle_{{\mathbb H}}\\[1.5mm]
	&\leq \left\|\Big(B(t_n,u(t_n))-B(t_{n},u^n)\Big)\Delta W^{n+1}\right\|_{\mathbb H}^2 +\frac18\|e^{n+1}-e^{n}\|^2_{\mathbb H}\\[1.5mm]
	&\quad+\left\langle \Big(B(t_n,u(t_n))-B(t_{n},u^n)\Big)\Delta W^{n+1}, e^{n}\right\rangle_{{\mathbb H}}.
	\end{split}
	\end{equation}
	After applying expectation on both sides of the above inequality \eqref{4.29} and using the global Lipschitz property of $B$, we get
	\begin{equation}\label{4.30}
	\begin{split}
	&{\mathbb E}\left\langle \Big(B(t_n,u(t_n))-B(t_{n},u^n)\Big)\Delta W^{n+1}, e^{n+1}\right\rangle_{{\mathbb H}}\\[1.5mm]
	&\leq {\mathbb E}\left\|\Big(B(t_n,u(t_n))-B(t_{n},u^n)\Big)\Delta W^{n+1}\right\|_{\mathbb H}^2 +\frac18{\mathbb E}\|e^{n+1}-e^{n}\|^2_{\mathbb H}\\[1.5mm]
	&\leq \|Q^{\frac12}\|_{HS(U,H^{\gamma}(D))}^2\tau{\mathbb E}\left\|e^{n}\right\|_{\mathbb H}^2 +\frac18{\mathbb E}\|e^{n+1}-e^{n}\|^2_{\mathbb H}.
	\end{split}
	\end{equation}
	
	Substituting \eqref{4.24}, \eqref{4.26}, \eqref{4.27}, \eqref{4.28} and \eqref{4.30} into \eqref{4.23} leads to
	\begin{equation}\label{4.31}
	\begin{split}
	&\frac{1}{2}{\mathbb E}\|e^{n+1}\|_{\mathbb H}^2-\frac{1}{2}{\mathbb E}\|e^n\|_{\mathbb H}^2+\frac{1}{2}{\mathbb E}\|e^{n+1}-e^n\|_{\mathbb H}^2\\
	&\leq
	C\tau^2 +\tau{\mathbb E}\|e^{n}\|^2_{\mathbb H}+\frac{1}{4}{\mathbb E}\|e^{n+1}-e^n\|_ {\mathbb H}^2+C\tau{\mathbb E}\|e^{n+1}\|_{\mathbb H}^2+\|Q^{\frac12}\|_{HS(U,H^{\gamma}(D))}^2\tau{\mathbb E}\left\|e^{n}\right\|_{\mathbb H}^2.
	\end{split}
	\end{equation}
	The discrete Gronwall's lemma leads to the assertion in case $\tau\leq \tau^{*}$ is chosen.
	
	Thus, the proof is completed.
\end{proof}
\begin{remark}
	If a $\theta$-method is applied to discretize stochastic Maxwell equations \eqref{sM_equations} in the temporal direction, i.e.,
	\begin{equation}
	u^{n+1}=u^{n}+\theta\tau Mu^{n+1}+(1-\theta)\tau Mu^{n}
	+\theta\tau F(t_{n+1},u^{n+1})+(1-\theta)\tau F(t_{n},u^n)
	+B(t_n,u^n)\Delta W^{n+1},
	\end{equation}
	then via the same procedure as Theorem \ref{thm:order} we could derive the result of mean-square convergence order $1/2$, i.e.,
	\begin{equation}
	\max_{0\leq n\leq N} \left(\mathbb{E}\|e^n\|_{\mathbb H}^2\right)^{1/2}\leq C\tau^{\frac12},
	\end{equation}
	where the positive constant $C$ may depend on the Lipschitz coefficients of $F$ and $B$, $T$, $\|u_0\|_{L^2(\Omega;{\mathcal D}(M^2))}$ and $\|Q^{\frac12}\|_{HS(U,H^{2+\gamma}(D))}$, but independent of $\tau$ and $n$.
	The key character in the proof is the appearance of the positive term $\|e^{n+1}-e^{n}\|^2_{\mathbb H}$ in the right-hand side of \eqref{4.31}, which could absorb the difficulty caused by the stochasticity of the continuous and discrete systems.
\end{remark}
\section{Conclusions}
In this paper, we consider a semi-implicit discretization in temporal direction for stochastic nonlinear Maxwell equations. First, we establish the regularity properties of the continuous and discrete problems. Then based on these regularity properties and utilizing the energy estimate technique, the mean-square convergence order $1/2$ is derived.

Future work will include the study for the full discretization of the stochastic Maxwell equations, in which the error estimates in spatial direction depend on the enough smoothness of the noise covariance and the initial data. Besides, due to the high dimensions and stochasticity of stochastic Maxwell equations, the computational implement is an important and technical issue. In order to approximate this problem efficiently and effectively, some techniques such as splitting approach may be employed, and thus the analysis of the effect on the convergence order induced by these techniques also constitutes future work.






\parindent=6mm

%
%
%
%
%
	
\end{document}